\begin{document}
\newtheorem{thm}{Theorem}[section]
\newtheorem{cor}[thm]{Corollary}
\newtheorem{lem}[thm]{Lemma}
\newtheorem{prop}[thm]{Proposition}
\theoremstyle{definition}
\newtheorem{defn}[thm]{Definition}
\theoremstyle{Assertion}
\newtheorem{asser}[thm]{Assertion}
\theoremstyle{remark}
\newtheorem{rem}[thm]{Remark}
\numberwithin{equation}{section}
\newcommand{\norm}[1]{\left\Vert#1\right\Vert}
\newcommand{\abs}[1]{\left\vert#1\right\vert}
\newcommand{\set}[1]{\left\{#1\right\}}
\newcommand{\Real}{\mathbb R}
\newcommand{\eps}{\varepsilon}
\newcommand{\To}{\longrightarrow}
\newcommand{\BX}{\mathbf{B}(X)}
\newcommand{\A}{\mathcal{A}}
\newcommand{\ts}{\textstyle}
\newcommand{\tg}{\mbox{\rm tg}}
\newcommand{\ctg}{\mbox{\rm ctg}}
\newcommand{\atg}{\tg^{-1}}
\newcommand{\actg}{\ctg^{-1}}
\newcommand{\asin}{\sin^{-1}}
\newcommand{\acos}{\cos^{-1}}
\newcommand{\dps}{\displaystyle}
\newcommand{\fnz}{\footnotesize}
\newcommand{\D}{\displaystyle}
\newcommand{\DF}[2]{\frac{\D#1}{\D#2}}
\newcommand{\scp}{\scriptstyle}

\title[GRP for modeling blood flows]
{A direct Eulerian GRP scheme for a blood flow model in arteries$^*$}%
\author[Sheng, Zhang and Zheng]{Wancheng Sheng$^{1}$ \qquad Qinglong Zhang$^{1}$\qquad Yuxi Zheng$^{2}$}
\thanks{$^*$Supported by NSFC 11771274.\\
\indent Email: mathwcsheng@t.shu.edu.cn (Wancheng Sheng), zhangqinglong@nbu.edu.cn (Qinglong Zhang), zheng@psu.edu(Yuxi Zheng)
}
\dedicatory{$^{1}$Department of Mathematics, Shanghai University,
Shanghai, 200444, P.R.China\\
\vskip 5pt
$^{2}$Department of Mathematics, The Pennsylvania State University, University Park, PA 16802, United States
}

\begin{abstract}
In this paper, we propose a direct Eulerian generalized Riemann problem (GRP) scheme for a blood flow model in arteries. It is an extension of the Eulerian GRP scheme, which is developed by Ben-Artzi, et. al. in J. Comput. Phys., 218(2006). By using the Riemann invariants, we diagonalize the blood flow system into a weakly coupled system, which is used to resolve rarefaction wave. We also use Rankine-Hugoniot condition to resolve the local GRP formulation. We pay special attention to the acoustic case as well as the sonic case. The extension to the two dimensional case is carefully obtained by using the dimensional splitting technique. We test that the derived GRP scheme is second order accuracy.

\vskip 6pt

\noindent%
{\sc Keywords.}~Blood flow, generalized Riemann problem, Eulerian GRP scheme, Riemann invariants, Rankine-Hugoniot conditions.

\vskip 6pt
\noindent%
{\sc 2010 AMS Subject Classification.} Primary: 35L60, 35L65, 35L67, 35R03; Secondary: 76L05, 76N10.
\end{abstract}

\maketitle

\section{Introduction}
A simple set of equations for the blood flow in arteries are given by \cite{Toro2}
\begin{equation}\label{1.1}
\left\{\begin{array}{l}
A_t+(A u)_{x}=0,\\
\displaystyle (A u)_t+(Au^2)_x+\frac{A}{\rho}p_x=0,
\end{array}\right.
\end{equation}
where $A(x,t)$ is the cross section area of the vessel, $\rho, p, u$ represent the density, pressure and the averaged velocity of the blood, respectively. Here we treat $\rho$ as a constant. The pressure $p$ is given by
\begin{equation}\label{1.2}
p=k(x)\big(\alpha^m-\alpha^n\big), \quad \alpha=\frac{A}{A_e}.
\end{equation}
$k(x)$ is the stiffness coefficient of the vessel, which represents the elastic properties of the vessel. $A_e$ is the cross section area at equilibrium state, which is assumed to be constant here. $m\geq 0$ and $n\leq 0$ are two constants. For the blood flows in arteries, we take $m=1/2, n=0$, see \cite{ToroSiviglia}.

In recent years, many numerical schemes have been developed to deal with the blood flow model. For example, a well-balanced high order scheme has been constructed for flow in blood vessels with varying mechanical properties in \cite{Muller2}.  Both a discontinuous Galerkin and a Taylor-Galerkin formulations have been proposed in \cite{Sherwin}. In \cite{Wang}, the authors designed a high order finite difference weighted non-oscillatory (WENO) scheme for the blood flow model. For other examples, we refer to \cite{Contarino,Muller1,Muller3}.

The GRP scheme was originally designed for compressible fluid flows \cite{BenF1}. It was extensively applied to many problems, including gas dynamics \cite{BenF2,Han}, reaction flows \cite{BenF}, relativistic hydrodynamics \cite{Yang}, and also used for designing high order numerical schemes \cite{Li1}. The GRP scheme has two versions: the Lagrangian and the Eulerian. The passage from the Lagrangian version to the Eulerian is sometimes quite delicate, especially for sonic cases and multidimensional applications. In order to deal with that, a direct Eulerian GRP scheme was recently developed for the shallow water equations \cite{Li} as well as the Euler equations \cite{BenL1}.  The main ingredient of GRP is the use of Riemann invariants to decompose the system into a diagonalized case so that the rarefaction waves could be analytically resolved in a straightforward way. However, most quasilinear systems do not have such a set of Riemann invariants. In \cite{BenL2}, the authors have introduced a concept: weakly coupled systems. Such systems have ``partial set" of Riemann invariants which enables a ``diagonalized" treatment of GRP.  In this paper, we consider the generalized Riemann problem for the blood flow model in arteries. We know that in blood flow models, the slope of characteristic curves are not only dependent on the state variables $A$ and $u$, but also on the spatial coordinate $x$, which is different from  the Eulerian case.
This encourages us to find ways to resolve rarefaction waves. For shock waves, the Rankine-Hugoniot conditions do not apply to the system directly since it is not hyperbolic anymore. However, we will show that the function k(x) remains constant across a shock wave, which enables us to establish the shock relation between the two sides.

Considering smooth solutions, we write \eqref{1.1} as
\begin{equation}\label{1.3}
\frac{{\rm \partial} U}{{\rm \partial} t}+\frac{{\rm \partial} F(U)}{{\rm \partial} x}+L(U)\frac{{\rm \partial} G(x)}{{\rm \partial} x}=0,
\end{equation}
where $U=\left(A,Au\right)^{T}$, and
\begin{equation}\label{1.4}
\begin{array}{ll}
F(U)=\left( \begin{array}{cc}
Au \\
 \displaystyle Au^2+\frac{mk}{\rho(m+1)}\cdot\frac{A^{m+1}}{A_e^{m}}
\end{array} \right),
\
L(U)=\left(  \begin{array}{cc}
0 & 0 \\
0& \displaystyle \frac{A^{m+1}}{\rho(m+1)A_e^{m}}-\frac{A}{\rho}
\end{array} \right),
\
G(x)=\left(  \begin{array}{c}
0  \\
k(x)
\end{array} \right).
\end{array}
\end{equation}
To construct the numerical flux, we need to resolve the generalized Riemann problem at each cell interface, given the piecewise smooth initial data. Specially, we denote the computational cell $j$ as $C_j=[x_{j-1/2},x_{j+1/2}]$, $\Delta x=x_{j+1/2}-x_{j-1/2}$, and $\{t_n\}_{n=0}^{\infty}$ as the sequence of the discretized time levels, $\Delta t=t_{n+1}-t_{n}$. We assume that the data at time $t=t_n$ is piecewise linear with a slope $\sigma_j^{n}$, which is
\begin{equation}\label{1.5}
U(x,t_n)=U_j^{n}+\sigma_j^{n}(x-x_j), \quad x\in (x_{j-1/2}, x_{j+1/2})
\end{equation}
on the cell $C_j$ . Then a "quasi-conservative" scheme is given by
\begin{equation}\label{1.6}
\displaystyle U_j^{n+1}=U_j^{n}-\frac{\Delta t}{\Delta x}\left(F_{j+1/2}^{n+1/2}-F_{j-1/2}^{n+1/2}\right)-\frac{\Delta t}{2\Delta x}\left(L_{j+1/2}^{n+1/2}+L_{j-1/2}^{n+1/2}\right)\left(G_{j+1/2}^{n+1/2}-G_{j-1/2}^{n+1/2}\right),
\end{equation}
where
$$
\displaystyle F_{j\pm1/2}^{n+1/2}=F\left(U_{j\pm1/2}^{n+1/2}\right), \quad L_{j\pm1/2}^{n+1/2}=L\left(U_{j\pm1/2}^{n+1/2}\right),\quad G_{j\pm1/2}^{n+1/2}=G\left(x_{j\pm1/2}^{n+1/2}\right),
$$
$U_j^n$ is the average of $U(x,t_n)$ over the cell $C_j$ and $U_{j\pm1/2}^{n+1/2}$ is the average of $U(x_{j\pm1/2},t)$ over the time interval $[t_n,t_{n+1}]$. The source term is discretized by the trapezoidal rule in space and the mid-point rule in time \cite{BenF2,Jin}. We are left with how to obtain the mid-point value $U_{j+1/2}^{n+1/2}$. We approximate this by the Taylor expansion ignoring the higher order terms,
\begin{equation}\label{1.7}
U_{j+1/2}^{n+1/2}\cong U_{j+1/2}^{n}+\frac{\Delta t}{2}\left(\frac{\partial U}{\partial t}\right)_{j+1/2}^{n},
\end{equation}
where
\begin{equation}\label{1.8}
\displaystyle U_{j+1/2}^{n}=\lim_{t\to t_n+0}U(x_{j+1/2},t),\quad \left(\frac{\partial U}{\partial t}\right)_{j+1/2}^{n}=\lim_{t\to t_n+0}\frac{\partial U}{\partial t}(x_{j+1/2},t).
\end{equation}
The value $U_{j+1/2}^{n}$ is obtained by solving the associated Riemann problem for the homogeneous hyperbolic conservative equations. Thus the main ingredient of our GRP solution is left with the calculation of $\left(\frac{\partial U}{\partial t}\right)_{j+1/2}^{n}$.
For the smooth solution $U$ near the grid point $(x_{j+1/2},t_n)$, we use \eqref{1.3} to get
\begin{equation}\label{1.9}
\left( \frac{{\rm \partial} U}{{\rm \partial} t}\right)_{j+1/2}^{n}=- F'(U_{j+1/2}^{n})\left(\frac{\partial U}{\partial x}\right)_{j+1/2}^{n}-L(U_{j+1/2}^{n})\left(\frac{{\rm \partial} G}{{\rm \partial} x}\right)_{j+1/2}^{n}.
\end{equation}
But this is not valid for the generalized Riemann problem including singularity at $(x_{j+1/2},t_n)$. We then want to look for a substitute of \eqref{1.9}. Since the characteristic fields are not only dependent on $A$ and $u$, but also dependent explicitly on $x$, there is no Riemann invariants to diagonalize the system. However, the system is endowed with a coordinate system of Riemann invariants (\cite{Li2}), such that \eqref{1.1} can be reduced to the quasi-diagonal form. Thus we can define the characteristic coordinate to resolve the rarefaction waves in the generalized Riemann problem and obtain the resultant GRP scheme.
The construction of the GRP scheme  for \eqref{1.3} is divided into two steps: (i) We use the exact (or approximate) Riemann solver in each grid point $(x_{j+1/2},t_n)$, see \cite{Toro1,Toro3} . (ii) We obtain the limiting value $\left(\frac{\partial U}{\partial t}\right)_{j+1/2}^{n}$ by solving a linear algebraic system.

This paper is organized as follows. In Section 2, we analyze some basic properties of elementary waves for blood flow model \eqref{1.1} and some basic setup for the GRP.  The resolution of rarefaction waves and shock waves are given in details in Sections 3 and 4 respectively. We make our conclusions on the GRP scheme and the acoustic approximation in Section 5.  In Section 6, we extend the GRP to the two-dimensional blood flow model by a dimensional splitting technique. We outline the implementation in Section 7 and provide some numerical tests including 1-D and 2-D cases in the last section.

\section{Preliminaries and notations}
\subsection{Characteristic analysis and elementary waves}
In this section, we present some preliminaries of the generalized Riemann problem. System \eqref{1.3} has two eigenvalues
\begin{equation}\label{2.2}
\lambda_1=u-c,\quad \lambda_2=u+c,
\end{equation}
where $\displaystyle c=\sqrt{\frac{mk}{\rho}\left(\frac{A}{A_e}\right)^{m}}$. We see that $\lambda_1$ and $\lambda_2$ depend explicitly on the spatial coordinate $x$, which is different from the Eulerian case. To resolve rarefaction waves, we introduce two variables $\phi$ and $\psi$
\begin{equation}\label{2.3}
\phi=u-\int^{A}\frac{c(\omega)}{\omega}{\rm d}\omega,\quad \psi=u+\int^{A}\frac{c(\omega)}{\omega}{\rm d}\omega.
\end{equation}
For the blood flow model in arteries, we take $m=1/2$.  \eqref{2.3} is equivalent to
\begin{equation}\label{2.4}
\displaystyle \phi=u-\frac{2}{m}c,\quad \psi=u+\frac{2}{m}c.
\end{equation}
We should note here that $\phi$ and $\psi$ are not only dependent on $A$ and $u$, but also dependent on the spatial coordinate $x$. Since there is no full coordinate system of Riemann invariant, \eqref{1.3} cannot be reduced to a diagonal form. However, it falls into a type of weakly coupled system \cite{BenL1}. Indeed, from \eqref{2.4}, we have by using \eqref{1.1} that
\begin{equation}\label{2.5'}
\begin{array}{ll}
\displaystyle \frac{\partial \phi}{\partial t}+(u-c) \frac{\partial \phi}{\partial x}=u_t-\frac{c}{A}A_t+(u-c)\left(u_x-\frac{c}{A}A_x-\frac{c}{mk}k_x\right)=\frac{k_x}{\rho}-\frac{uck_x}{mk}.
\end{array}
\end{equation}
Similarly, we have
\begin{equation}\label{2.6'}
\begin{array}{ll}
\displaystyle \frac{\partial \psi}{\partial t}+(u+c) \frac{\partial \psi}{\partial x}=u_t+\frac{c}{A}A_t+(u+c)\left(u_x+\frac{c}{A}A_x+\frac{c}{mk}k_x\right)=\frac{k_x}{\rho}+\frac{uck_x}{mk}.
\end{array}
\end{equation}
Thus we take $\phi$, $\psi$ as (quasi)-Riemann invariants to rewrite \eqref{1.3} as
\begin{equation}\label{2.5}
\left\{
\begin{array}{ll}
\displaystyle \frac{\partial \phi}{\partial t}+(u-c) \frac{\partial \phi}{\partial x}=B_1,\\[12pt]
\displaystyle \frac{\partial \psi}{\partial t}+(u+c) \frac{\partial \psi}{\partial x}=B_2,
\end{array}
\right.
\end{equation}
where $\displaystyle B_1=\frac{k_x}{mk}\cdot\left(\frac{mk}{\rho}-uc\right)$, $\displaystyle B_2=\frac{k_x}{mk}\cdot\left(\frac{mk}{\rho}+uc\right)$.
We use the weakly coupled form \eqref{2.5} to resolve the generalized Riemann problem for \eqref{1.3} subject to the initial data
\begin{equation}\label{2.6}
U(x,0)=\left\{
\begin{array}{ll}
U_L+U_L^{'}x,\quad x<0,\\[9pt]
U_R+U_R^{'}x,\quad x>0,
\end{array}
\right.
\end{equation}
where $U_L, U_R, U_L^{'},U_R^{'}$ are constant vectors.
The initial structure of the solution is determined by the associated Riemann problem
\begin{equation}\label{2.7}
\begin{array}{ll}
\displaystyle \frac{{\rm \partial} U}{{\rm \partial} t}+\frac{{\rm \partial} F(U)}{{\rm \partial} x}=0,\end{array}
\end{equation}

\begin{figure}[htbp]
\subfigure{
\begin{minipage}[t]{0.48\textwidth}
\centering
\includegraphics[width=\textwidth]{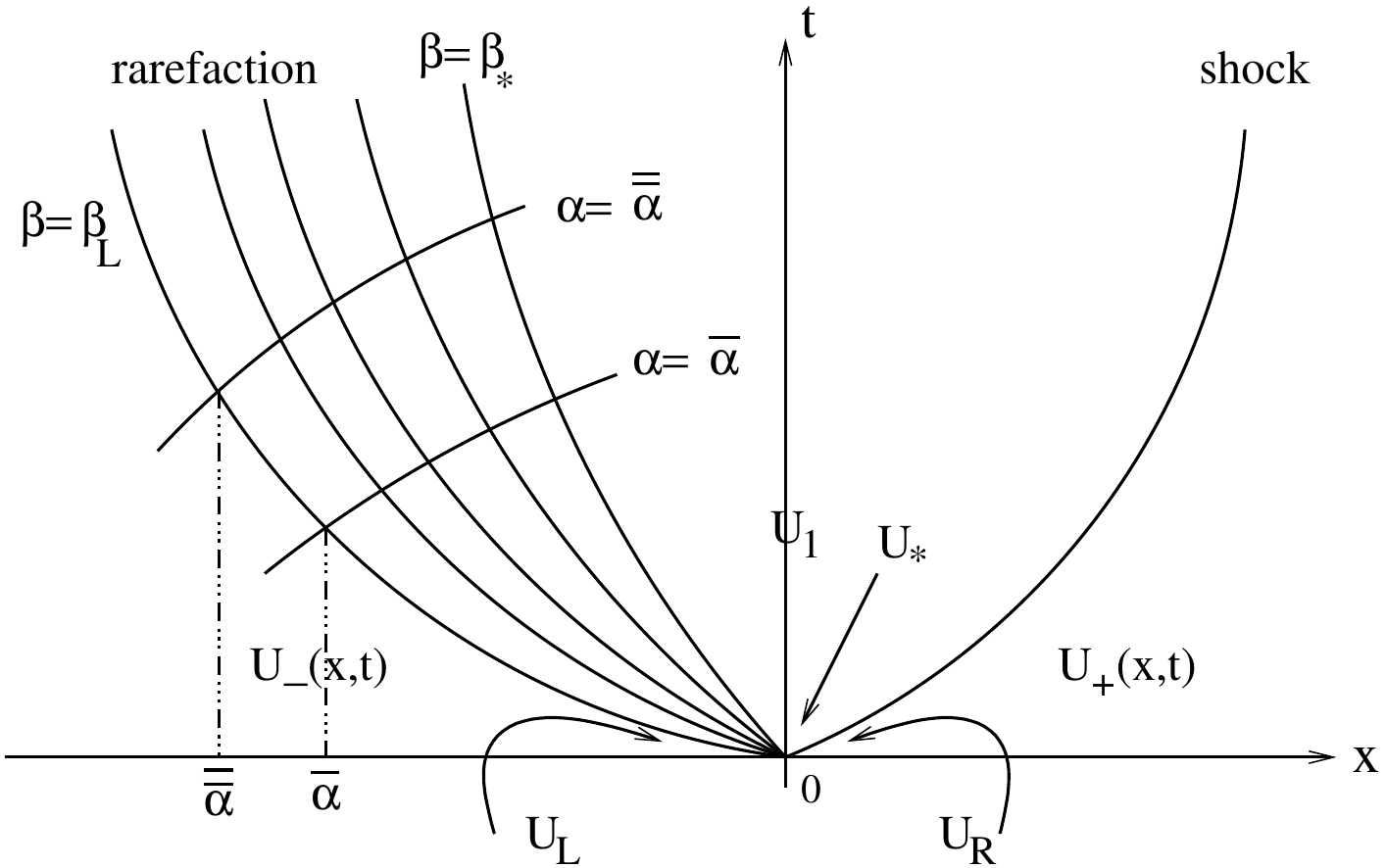}
\end{minipage}
}
\caption*{(a) Wave patterns for GRP. The initial data (\ref{2.6}).}
\subfigure{
\begin{minipage}[t]{0.48\textwidth}
\centering
\includegraphics[width=\textwidth]{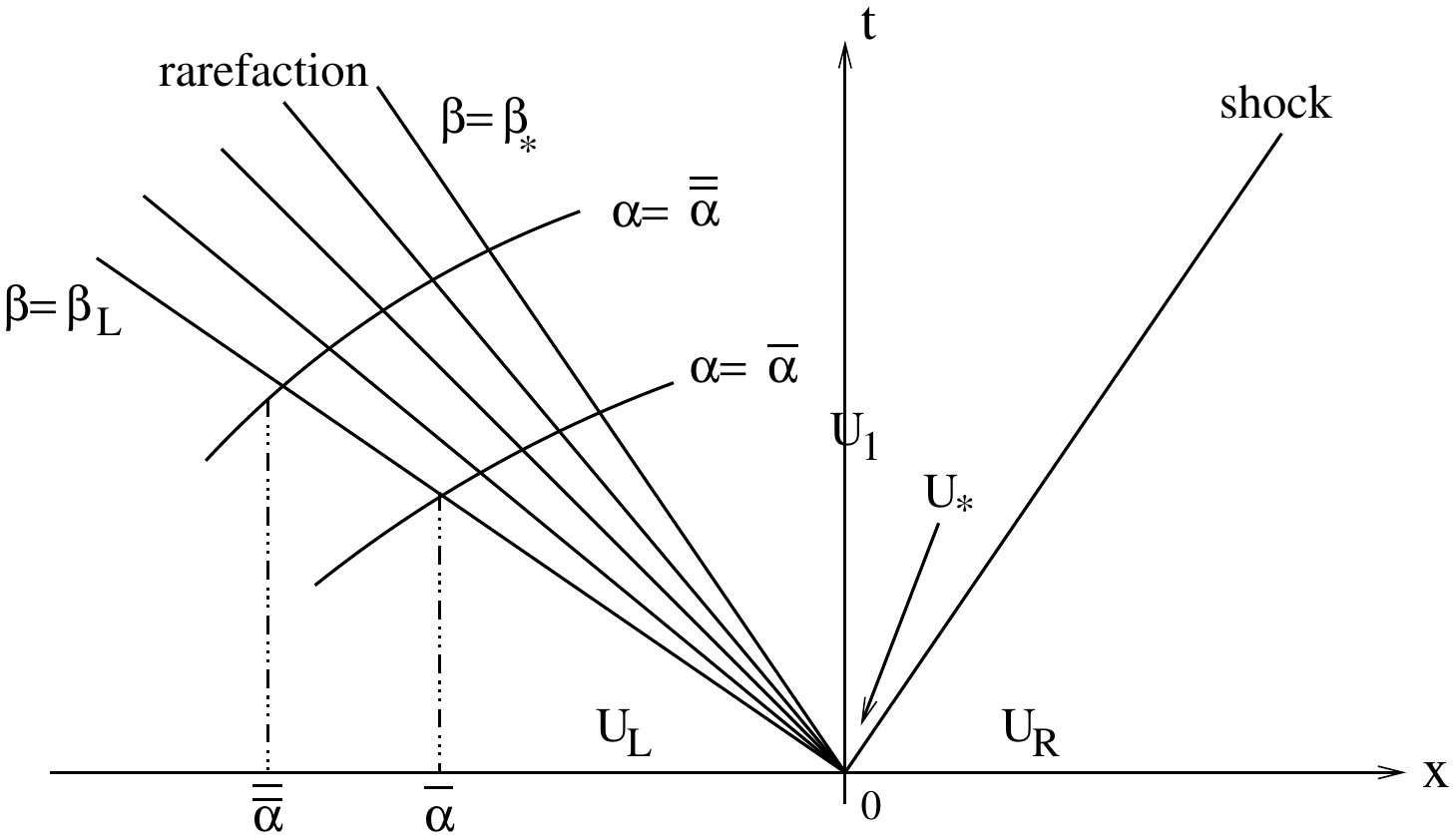}
\end{minipage}
}
\caption*{(b) Wave patterns for the associated Riemann problem.}
\caption*{{Fig.2.1. Typical wave configuration.}}
\end{figure}
\noindent%
with
\begin{equation}\label{2.8}
U(x,0)=\left\{
\begin{array}{ll}
U_L, \quad x<0,\\
U_R, \quad x>0.
\end{array}
\right.
\end{equation}
Denote $R^{A}(x/t;U_L,U_R)$  as the Riemann solution of \eqref{2.7} with \eqref{2.8}, we have the following proposition.
\begin{prop}
Let $U(x,t)$ be the solution to the generalized Riemann problem $\eqref{1.3}$ with \eqref{2.6}. Then for the fixed direction $\lambda=x/t$,
\begin{equation}\label{2.9}
\lim_{t\to 0+}{U(\lambda t, t)}=R^{A}(\lambda; U_L,U_R).
\end{equation}
This implies that the wave configuration for the generalized Riemann problem \eqref{1.3} with \eqref{2.6} is the same with that for the associated Riemann problem \eqref{2.7} with \eqref{2.8} near the origin $(x,t)=(0,0)$.
\end{prop}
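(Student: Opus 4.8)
The plan is to pass to the limit via the hyperbolic rescaling $(x,t)\mapsto(\eps\xi,\eps\tau)$ and then use the $L^{1}$-stability of the self-similar Riemann solution. Fix $\eps>0$ and set $U^{\eps}(\xi,\tau):=U(\eps\xi,\eps\tau)$ for $(\xi,\tau)\in\Real\times(0,\infty)$. Since $\partial_{t}=\eps^{-1}\partial_{\tau}$ and $\partial_{x}=\eps^{-1}\partial_{\xi}$, substituting into \eqref{1.3} and multiplying through by $\eps$ shows that $U^{\eps}$ solves
\[
\frac{\partial U^{\eps}}{\partial\tau}+\frac{\partial F(U^{\eps})}{\partial\xi}+\eps\,L(U^{\eps})\,G'(\eps\xi)=0,
\]
where inside $F(U^{\eps})$ the stiffness coefficient is evaluated at $\eps\xi$, so that $\partial_{\xi}F(U^{\eps})$ also carries an extra $O(\eps)$ contribution coming from $k'(\eps\xi)$; the initial data become, by \eqref{2.6},
\[
U^{\eps}(\xi,0)=\left\{\begin{array}{ll}U_{L}+\eps\,U_{L}'\,\xi,&\xi<0,\\[4pt]U_{R}+\eps\,U_{R}'\,\xi,&\xi>0.\end{array}\right.
\]
With $k$ of class $C^{1}$ near $x=0$ (the generic situation at an interface), $G'$ is bounded there, so the source term together with the $k'$-contribution is $O(\eps)$ uniformly on compact subsets of $\Real\times(0,\infty)$, while $k(\eps\xi)\to k(0)$; moreover the initial data converge as $\eps\to0+$, locally uniformly away from $\xi=0$ and in $L^{1}_{\mathrm{loc}}$, to the piecewise-constant Riemann data \eqref{2.8}. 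Thus, heuristically, $U^{\eps}$ tends to a solution of the homogeneous conservation law \eqref{2.7} (with $k$ frozen at $k(0)$) and the data \eqref{2.8}.

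The next step is to make this rigorous. The principal part $U_{\tau}+F(U)_{\xi}=0$ is strictly hyperbolic on the relevant range of states ($\lambda_{1,2}=u\mp c$, $c>0$); on a fixed short time strip the GRP solution $U$ has bounded $L^{\infty}$ norm and locally bounded total variation near the origin, and rescaling preserves these bounds, so $\{U^{\eps}\}$ is precompact in $L^{1}_{\mathrm{loc}}$. By Helly's selection theorem a subsequence $U^{\eps_{k}}\to V$ in $L^{1}_{\mathrm{loc}}$; letting $\eps_{k}\to0$ in the rescaled equation, the $O(\eps)$ terms drop out and the convergence of the data gives that $V$ is an entropy solution of \eqref{2.7}, \eqref{2.8}. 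By uniqueness of the self-similar entropy solution of the Riemann problem, $V(\xi,\tau)=R^{A}(\xi/\tau;U_{L},U_{R})$, so the limit is independent of the subsequence and the whole family converges.

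Finally, taking $\eps=t$ and evaluating at $(\xi,\tau)=(\lambda,1)$ yields
\[
U(\lambda t,t)=U^{t}(\lambda,1)\ \To\ R^{A}(\lambda;U_{L},U_{R})\qquad(t\to0+),
\]
which is \eqref{2.9}. Since $R^{A}(\,\cdot\,;U_{L},U_{R})$ is made up of finitely many constant states separated by centred rarefaction fans and shocks, \eqref{2.9} shows that the GRP solution exhibits exactly this configuration in an infinitesimal neighbourhood of $(0,0)$, which is the asserted statement. (The case of a $k$ that is discontinuous at the interface is handled analogously.)

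I expect the main obstacle to be the uniform local-$\mathrm{BV}$ bound used in the second step: it is not automatic for \eqref{1.3}, since $\lambda_{1,2}$ and the source depend explicitly on $x$ and one cannot simply invoke Glimm-type estimates for a constant-coefficient system. The cleanest way around this is to appeal to the local-in-time existence theory for generalized Riemann problems in the spirit of \cite{BenL1}, which already produces a piecewise-smooth solution of \eqref{1.3} whose elementary-wave structure near $(0,0)$ coincides with that of $R^{A}$; the Proposition then follows by reading off that structure. A more self-contained alternative avoids global compactness and argues sector by sector in $R^{A}$: in a constant-state sector the GRP solution is $C^{1}$ and converges to the constant; across a rarefaction the quasi-diagonal system \eqref{2.5} shows the characteristic fan opens to the same angular range; and across a shock the Rankine--Hugoniot relations for \eqref{1.3} differ from the homogeneous ones only by the $O(\eps)$ source, so the shock issuing from the origin has, in the $t\to0+$ limit, the Riemann shock speed and the correct adjacent states. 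Either way the delicate point is the same: controlling the explicit $x$-dependence of $\lambda_{1,2}$ and of the source uniformly as $(x,t)\to(0,0)$.
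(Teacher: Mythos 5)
First, note that the paper does not actually prove this proposition: it is stated and then justified only by the sentence ``See \cite{BenL1} for more details,'' so there is no in-paper argument to compare against. Your rescaling $U^{\eps}(\xi,\tau)=U(\eps\xi,\eps\tau)$ is the right heuristic picture, and you have correctly located the weak point yourself, but it is worth being explicit that it is a genuine gap and not merely a technicality. Two things break in the compactness route. (i) The uniform local $\mathrm{BV}$ bound is not available for free: $\mathrm{TV}(U^{\eps}(\cdot,\tau);[-M,M])=\mathrm{TV}(U(\cdot,\eps\tau);[-\eps M,\eps M])$, so what you need is that the total variation of $U$ on shrinking space intervals at shrinking times stays bounded, and that is exactly the statement that only finitely many waves of bounded strength emanate from the origin --- i.e.\ the conclusion of the proposition in disguise. (ii) Even granting a convergent subsequence $U^{\eps_k}\to V$, identifying $V$ with $R^{A}$ requires uniqueness of the entropy solution of \eqref{2.7}--\eqref{2.8} within the class of all $L^{1}_{\mathrm{loc}}$ limits obtained this way; $V$ is not known a priori to be self-similar, so Lax's uniqueness among self-similar solutions does not apply, and uniqueness of general $\mathrm{BV}$ entropy solutions for $2\times2$ systems is only established for small-variation data, which the Riemann data here need not satisfy.

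The way this statement is actually established in the GRP literature (and in \cite{BenL1}, which the paper leans on) is your ``self-contained alternative'': one \emph{constructs} the local-in-time solution of \eqref{1.3}, \eqref{2.6} sector by sector as a perturbation of $R^{A}$ --- constant states become $C^{1}$ regions obtained from the characteristic form \eqref{2.5}, centered fans become curved fans parametrized by the characteristic coordinates $(\alpha,\beta)$ of Section~3, and shocks become $C^{1}$ trajectories whose speed at $t=0$ is the Riemann shock speed --- and then \eqref{2.9} is read off from the construction. So your fallback is not a fallback; it is the proof, and the scaling/compactness argument should be presented only as motivation. One further caution: your closing parenthetical that a $k$ discontinuous at the interface ``is handled analogously'' is not right --- a jump in $k$ introduces a stationary wave and a non-conservative product (possible resonance with $\lambda_{1,2}=u\mp c$), and the paper explicitly excludes this by assuming $k$ continuous in Section~4.
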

We illustrate the Proposition in Fig. 2.1. The solution of \eqref{1.3} (a) is curved and the corresponding solution of \eqref{2.7} (b) is self-similar. See \cite{BenL1} for more details.

Now we use the weakly coupled form to resolve the generalized Riemann problem \eqref{1.3} subject to initial data \eqref{2.6}.
Consider the wave configuration in Fig.2.1, a rarefaction wave moves to the left and a shock wave moves to the right. The intermediate state is denoted by $U_1$.
Denote $U_{*}$ as the limiting state at $x=0$ as $t\to 0$.

\section{The resolution of rarefaction waves}
Consider the rarefaction wave associated with $u-c$ from the left, see Fig. 2.1. Since the system is endowed with a coordinate system of Riemann invariants $\phi$ and $\psi$ (\cite{BenL1}), \eqref{2.5} can be written as
\begin{equation}\label{3.1'}
\left\{
\begin{array}{ll}
\displaystyle \frac{\partial \phi}{\partial t}+\lambda_1(\phi,\psi) \frac{\partial \phi}{\partial x}=B_1(x,\phi,\psi),\\[12pt]
\displaystyle \frac{\partial \psi}{\partial t}+\lambda_2(\phi,\psi) \frac{\partial \psi}{\partial x}=B_2(x,\phi,\psi).
\end{array}
\right.
\end{equation}
The characteristic coordinates play an important role in the resolution of rarefaction waves. Denote the characteristic curves throughout the rarefaction wave as $\beta=\beta(x,t)$ and $\alpha=\alpha(x,t), \beta \in [\beta_L,\beta_{*}], -\infty\leq \alpha<0.$ $\beta_L=u_L-c_L$, $\beta_{*}=u_{*}-c_{*}$. Let $C^{\lambda_1}: \beta(x,t)=const.$ be the integral curve of the differential equation
\begin{equation}\label{2.10}
\displaystyle \frac{{\rm d}x}{{\rm d}t}=\lambda_1(\phi, \psi),
\end{equation}
and
$C^{\lambda_2}: \alpha(x,t)=const.$ be the integral curve of the differential equation
\begin{equation}\label{2.10'}
\displaystyle \frac{{\rm d}x}{{\rm d}t}=\lambda_2(\phi, \psi).
\end{equation}
Thus the coordinates $(x,t)$ can be expressed as functions of $\alpha$ and $\beta$. This transformation is denoted by
\begin{equation}\label{2.11}
\displaystyle x=x(\alpha,\beta),\quad t=t(\alpha,\beta).
\end{equation}
In terms of the characteristic coordinates ($\alpha$, $\beta$), system \eqref{3.1'} can be rewritten in the following form
\begin{equation}\label{2.12}
\left\{
\begin{array}{ll}
\displaystyle \frac{\partial x}{\partial \alpha}=\lambda_1 \frac{\partial t}{\partial \alpha},\quad \frac{\partial \phi}{\partial \alpha}=\frac{\partial t}{\partial \alpha}\cdot B_1(x(\alpha,\beta),\phi,\psi),\\[12pt]
\displaystyle \frac{\partial x}{\partial \beta}=\lambda_2 \frac{\partial t}{\partial \beta}, \quad \frac{\partial \psi}{\partial \beta}=\frac{\partial t}{\partial \beta}\cdot B_2(x(\alpha,\beta),\phi,\psi).
\end{array}
\right.
\end{equation}
This implies
\begin{equation}\label{2.13}
\displaystyle \frac{\partial^{2} t}{\partial \alpha \partial \beta}=\frac{1}{\lambda_2-\lambda_1}\cdot \left(\frac{\partial \lambda_1}{\partial \beta}\frac{\partial t}{\partial \alpha}-\frac{\partial \lambda_2}{\partial \alpha}\frac{\partial t}{\partial \beta}\right).
\end{equation}
Similarly, a local coordinate transformation is also introduced within the region of the left rarefaction wave in Fig. 2.1. We denote it by $x_{ass}=x(\alpha,\beta), t_{ass}=t(\alpha,\beta)$. Following the same derivation in \cite{BenL2}, we have that
\begin{equation}\label{2.14}
\displaystyle  t_{ass}(\alpha,\beta)=\frac{\alpha }{\left(\psi_L-\beta\right)^{\frac{m+2}{2m}}},\quad x_{ass}(\alpha,\beta)=\frac{\alpha \beta}{\left(\psi_L-\beta\right)^{\frac{m+2}{2m}}}.
\end{equation}
where $\psi_{L}=\psi(u_L,A_L)$. Particularly, at $\alpha=0$ we have
\begin{equation}\label{2.15}
\displaystyle  \frac{\partial \lambda_1}{\partial \beta}(0,\beta)=1, \quad \frac{\partial t}{\partial \alpha}(0,\beta)=\frac{\partial t_{ass}}{\partial \alpha}(0,\beta),\quad  \frac{\partial t}{\partial \beta}(0,\beta)=0, \quad \beta_L\leq \beta\leq \beta_{*}.
\end{equation}
Moreover, in the limit case $\alpha\to 0$, we have $u(0,\beta)-c(0,\beta)=\beta$ so that $c=\mu\left(\psi_L-\beta\right)$ and $u=(\mu-1)\left(\psi_L-\beta\right)+\psi_L$, where $\mu=\frac{m}{m+2}$. The linear relations of $\partial u/\partial t$ and $\partial A/\partial t$ are stated in the following lemma.
\begin{lem}
Assume that the rarefaction wave associated with $u-c$ moves to the left, see Fig. 2.1.
Use the characteristic coordinates $(\alpha,\beta)$ as above and the limiting  values $\frac{\partial u}{\partial t}(0,\beta)$ and $\frac{\partial p}{\partial t}(0,\beta), \beta_L\leq \beta\leq \beta_{*}$ satisfies the following relation
\begin{equation}\label{2.16}
a_L(0,\beta)\cdot \frac{\partial u}{\partial t}(0,\beta)+b_L(0,\beta)\cdot \frac{\partial A}{\partial t}(0,\beta)=d_L(\beta),
\end{equation}
where the coefficients $a_L, b_L$ and  $d_L$ are expressed as follows.
\begin{equation}\label{2.17}
\left(a_L(0,\beta),b_L(0,\beta)\right)=\left(1, \frac{c(0,\beta)}{A(0,\beta)}\right),
\end{equation}
and
\begin{equation}\label{2.18}
\displaystyle  d_L(\beta)=\frac{\beta+2c(0,\beta)}{2c(0,\beta)}\cdot (\psi_L-\beta)^{\frac{m+2}{2m}}\cdot \frac{\partial \psi}{\partial \alpha}(0,\beta)-\frac{\beta}{2c(0,\beta)}\cdot \left(\frac{k^{'}(0)}{\rho}+\frac{\left(\beta+c(0,\beta)\right)\cdot c(0,\beta)}{m}\cdot\frac{k^{'}(0)}{k(0)}\right),
\end{equation}
where  $\beta=u(0,\beta)-c(0,\beta)$, $\theta=c(0,\beta)/c_L$. $\displaystyle \frac{\partial \psi}{\partial \alpha}(0,\beta)$ is given explicitly by

\begin{equation}\label{2.19}
\begin{array}{ll}
\displaystyle \frac{\partial \psi}{\partial \alpha}(0,\beta)=\frac{\partial \psi}{\partial \alpha}(0,\beta_L)+\frac{k'(0)}{\rho}\left(\frac{m+2}{m}c_L\right)^{-\frac{m+2}{2m}}(\theta^{-\frac{m+2}{2m}}-1)
\\[9pt]
~~\displaystyle  -\frac{\psi_L}{m-2}\frac{k'(0)}{k(0)}\left(\frac{m+2}{m}c_L\right)^{\frac{m-2}{2m}}(\theta^{\frac{m-2}{2m}}-1)+
\frac{2}{(m+2)(3m-2)}\frac{k'(0)}{k(0)}\left(\frac{m+2}{m}c_L\right)^{\frac{3m-2}{2m}}(\theta^{\frac{3m-2}{2m}}-1)
\end{array}
\end{equation}
with
\begin{equation}\label{2.20}
\frac{\partial \psi}{\partial \alpha}(0,\beta_L)=\left(\frac{m+2}{m}c_L\right)^{-\frac{m+2}{2m}}\cdot \left(\frac{k^{'}(0)}{\rho}+\frac{u_Lc_L}{m}\cdot \frac{k^{'}(0)}{k(0)}-2c_L\psi_L^{'}\right).
\end{equation}
\end{lem}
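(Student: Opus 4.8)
\emph{Proof idea.} The strategy is to collapse the statement onto a single scalar, the limiting value $\partial\psi/\partial t(0,\beta)$, and to evaluate it by coupling the transport law for $\psi$ in \eqref{2.5} with the characteristic--coordinate relations \eqref{2.12}--\eqref{2.15}. First, the coefficient pair in \eqref{2.17} is forced by a purely algebraic identity: since $c^{2}=\tfrac{mk(x)}{\rho}(A/A_e)^{m}$ depends on $t$ only through $A$, one has $\partial c/\partial t=\tfrac{mc}{2A}\,\partial A/\partial t$ at fixed $x$, whence, from $\psi=u+\tfrac{2}{m}c$,
\[
\frac{\partial\psi}{\partial t}=\frac{\partial u}{\partial t}+\frac{c}{A}\,\frac{\partial A}{\partial t}.
\]
Thus \eqref{2.16}--\eqref{2.17} are equivalent to the single identity $\partial\psi/\partial t(0,\beta)=d_L(\beta)$, and it remains to compute $\partial\psi/\partial t(0,\beta)$.

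To express $\partial\psi/\partial t$ through $\partial\psi/\partial\alpha$, note that the curves $\beta=\mathrm{const}$ are $\lambda_1$--characteristics (first line of \eqref{2.12}), so $\psi_\alpha=t_\alpha(\psi_t+\lambda_1\psi_x)$; eliminating $\psi_x$ between this and $\psi_t+\lambda_2\psi_x=B_2$ gives $\partial\psi/\partial t=\tfrac{-\lambda_1}{\lambda_2-\lambda_1}B_2+\tfrac{\lambda_2}{\lambda_2-\lambda_1}\,t_\alpha^{-1}\,\partial\psi/\partial\alpha$. Evaluating at $\alpha=0$, where $\lambda_2-\lambda_1=2c$, $\lambda_1(0,\beta)=\beta$, and $t_\alpha(0,\beta)=(\psi_L-\beta)^{-\frac{m+2}{2m}}$ by \eqref{2.14}--\eqref{2.15}, and using $u(0,\beta)=\beta+c(0,\beta)$ to write $B_2(0,\beta)=\tfrac{k'(0)}{\rho}+\tfrac{(\beta+c)c}{m}\tfrac{k'(0)}{k(0)}$, yields exactly \eqref{2.18}, now expressed through $\partial\psi/\partial\alpha(0,\beta)$.

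The remaining ingredient is the function $\partial\psi/\partial\alpha(0,\beta)$. Differentiating $\psi_\beta=t_\beta B_2$ (second line of \eqref{2.12}) in $\alpha$ and using $t_\beta(0,\beta)=0$ gives the ODE $\tfrac{d}{d\beta}\big(\partial\psi/\partial\alpha(0,\beta)\big)=t_{\alpha\beta}(0,\beta)\,B_2(0,\beta)$, where \eqref{2.13} together with $\lambda_{1,\beta}(0,\beta)=1$ and $t_\beta(0,\beta)=0$ gives $t_{\alpha\beta}(0,\beta)=\tfrac{1}{2c(0,\beta)}(\psi_L-\beta)^{-\frac{m+2}{2m}}$. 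Substituting the self-similar profile $c(0,\beta)=\mu(\psi_L-\beta)$, $u(0,\beta)=\psi_L-\tfrac{2}{m+2}(\psi_L-\beta)$, $\mu=\tfrac{m}{m+2}$, makes the right-hand side a combination of powers of $s:=\psi_L-\beta$ with exponents $-1-q$, $-q$, $1-q$, $q=\tfrac{m+2}{2m}$; integrating from $\beta_L$ to $\beta$ and writing $\theta=c(0,\beta)/c_L=s/s_L$, $s_L=\tfrac{m+2}{m}c_L$, produces \eqref{2.19}. Its integration constant $\partial\psi/\partial\alpha(0,\beta_L)$ is fixed by matching at the head of the rarefaction, where the solution is smooth and joins the left linear data: the value of $\partial\psi/\partial t(0,\beta_L)$ delivered by the formula above must equal $B_2|_L-(u_L+c_L)\psi_L'$ obtained by evaluating \eqref{2.5} on the left data ($\psi_L'$ the spatial slope of $\psi$ induced by $U_L'$). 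Solving this single linear equation and simplifying with $\beta_L+c_L=u_L$, $2c_L+\beta_L=u_L+c_L$, $\psi_L-\beta_L=\tfrac{m+2}{m}c_L$ gives \eqref{2.20}.

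Everything outside the last step is a short computation; the bookkeeping-heavy part --- and the place where sign/exponent slips are most likely --- is the explicit integration of the ODE for $\partial\psi/\partial\alpha(0,\beta)$: one must expand $B_2$ along the self-similar profile into precisely three monomials in $s=\psi_L-\beta$, integrate each (the exponents collapse because $1/q=\tfrac{2m}{m+2}$, $1-q=\tfrac{m-2}{2m}$, $2-q=\tfrac{3m-2}{2m}$, which is what produces the coefficients $\tfrac{1}{\rho}$, $\tfrac{\psi_L}{m-2}$, $\tfrac{2}{(m+2)(3m-2)}$ in \eqref{2.19}), and then re-express the result through $\theta$ and $c_L$. (We note in passing that the ``$\partial p/\partial t$'' in the statement should read ``$\partial A/\partial t$'', consistently with \eqref{2.16}.)
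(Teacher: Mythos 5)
Your proposal is correct and follows essentially the same route as the paper's proof: reduce \eqref{2.16}--\eqref{2.17} to computing $\partial\psi/\partial t(0,\beta)$, derive the ODE $\frac{d}{d\beta}\bigl(\partial\psi/\partial\alpha(0,\beta)\bigr)=\frac{1}{2c}\,t_\alpha(0,\beta)\,B_2(0,\beta)$ from \eqref{2.12}--\eqref{2.15}, integrate along the self-similar profile, and recover $\partial\psi/\partial t$ via the elimination identity \eqref{2.28}. The only material difference is that you actually derive the initial value \eqref{2.20} (by matching $\partial\psi/\partial t$ at the head characteristic with the value obtained from the left linear data), which the paper merely asserts; your observation that ``$\partial p/\partial t$'' should read ``$\partial A/\partial t$'' is also correct.
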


\begin{proof}
The equation for $\psi$ in \eqref{2.4} yields
\begin{equation}\label{2.21}
\frac{\partial \psi}{\partial t}=\frac{\partial u}{\partial t}+ \frac{c}{A}\frac{\partial A}{\partial t},
\end{equation}
so we only need to compute $\frac{\partial \psi}{\partial t}$.
From \eqref{2.12} we have
\begin{equation}\label{2.22}
\left\{
\begin{array}{ll}
\displaystyle \frac{\partial \psi}{\partial \beta}=\frac{\partial t}{\partial \beta}\cdot\left(\frac{\partial \psi}{\partial t}+(u+c)\frac{\partial \psi}{\partial x}\right),\\[12pt]
\displaystyle  \frac{\partial \psi}{\partial \alpha}=\frac{\partial t}{\partial \alpha}\cdot\left(\frac{\partial \psi}{\partial t}+(u-c)\frac{\partial \psi}{\partial x}\right),
\end{array}
\right.
\end{equation}
together with \eqref{2.5} and \eqref{2.15}, we have
\begin{equation}\label{2.23}
\frac{\partial}{\partial \beta}\left(\frac{\partial \psi}{\partial \alpha}(0,\beta)\right)=\frac1{2c(0,\beta)}\cdot \frac{\partial t}{\partial \alpha}(0,\beta)\cdot B_2(0,\beta).
\end{equation}
The integration from $\beta_L$ to $\beta$ yields,
\begin{equation}\label{2.24}
\frac{\partial \psi}{\partial \alpha}(0,\beta)=\frac{\partial \psi}{\partial \alpha}(0,\beta_L)+\int_{\beta_L}^{\beta}\frac1{2c(0,\eta)}\cdot \frac{\partial t}{\partial \alpha}(0,\eta)\cdot B_2(0,\eta){\rm d}\eta.
\end{equation}
The initial data of $\partial \psi/\partial \alpha$ is given by
\begin{equation}\label{2.25}
\frac{\partial \psi}{\partial \alpha}(0,\beta_L)=\left(\frac{m+2}{m}c_L\right)^{-\frac{m+2}{2m}}\cdot \left(\frac{k^{'}(0)}{\rho}+\frac{u_Lc_L}{m}\cdot \frac{k^{'}(0)}{k(0)}-2c_L\psi_L^{'}\right).
\end{equation}
The integration term in \eqref{2.24} can be obtained directly by using \eqref{2.14} and \eqref{2.15}.

Once we obtain $\partial \psi/\partial \alpha(0,\beta)$ from \eqref{2.24}, we use the following relation from \eqref{2.5} and \eqref{2.22},
\begin{equation}\label{2.26}
\displaystyle  \frac{\partial \psi}{\partial \alpha}=\frac{\partial t}{\partial \alpha}\cdot\left(\frac{\partial \psi}{\partial t}+(u-c)\frac{\partial \psi}{\partial x}\right)=\frac{\partial t}{\partial \alpha}\cdot\left(B_2-2c\frac{\partial \psi}{\partial x}\right)
\end{equation}
to get the expression of $\partial \psi/\partial x$. Since in \eqref{2.22}, we note
\begin{equation}\label{2.27}
\displaystyle  \frac{\partial \psi}{\partial t}+(u-c) \cdot \frac{\partial \psi}{\partial x}=\left(\frac{\partial t}{\partial \alpha}\right)^{-1}\cdot\frac{\partial \psi}{\partial \alpha}.
\end{equation}
We insert \eqref{2.24} and \eqref{2.26} to \eqref{2.27}, and get
\begin{equation}\label{2.28}
\displaystyle  \frac{\partial \psi}{\partial t}(0,\beta)=-\frac{u-c}{2c}\cdot B_2(0,\beta)+\frac{u+c}{2c}\cdot \left(\frac{\partial t}{\partial \alpha}\right)^{-1}\cdot \frac{\partial \psi}{\partial \alpha}(0,\beta).
\end{equation}
Thus using $\displaystyle B_2(0,\beta)=\frac{k'(0)}{mk(0)}\cdot\left(\frac{mk(0)}{\rho}+u(0,\beta)\cdot c(0,\beta)\right)$, we obtain \eqref{2.18}.
\end{proof}

\section{The resolution of shock waves}
In this section, we resolve the shock wave at the origin. Let $x=x(t)$ be the shock trajectory associated with the second characteristic field, see Fig. 2.1. Denote the left- and right-hand states of the shock wave by $U$ and $\overline{U}$, i.e., $U(t) :=U(x(t-0), 0)$ and $\overline{U}(t) :=U(x(t+0), 0)$. Across the shock wave, the Rankine-Hugoniot relation can be written in the form:
\begin{equation}\label{4.1'}
\displaystyle \zeta(A, u, k, A_e, \bar{A}, \bar{u}, \bar{k}, \bar{A}_e)=0.
\end{equation}
Since $A_e$ is constant as assumed, $k(x)$ is a continuous function with respect to $x$. We have  $A_e=\bar{A}_e$ and $k=\bar{k}=k(x(t))$. Which indicates that $k(x(t))$  remains unchanged in a sufficient small domain of shock wave. Thus from \eqref{1.3}, we write the Rankine-Hugoniot relation as
\begin{equation}\label{4.2'}
\left\{
\begin{array}{ll}
\displaystyle \sigma[A]=[Au],\\
\displaystyle  \sigma[Au]=\left[Au^2+\frac{mk}{\rho(m+1)}\cdot\frac{A^{m+1}}{A_e^{m}}\right],
\end{array}
\right.
\end{equation}
where $``[f]"$ mean the jump of function $f$ across shock wave. By eliminating $\sigma$ in \eqref{4.2'}, we have
\begin{equation}\label{3.1}
u=\overline{u}+\Phi\left(A;\overline{A},\overline{u}\right),
\end{equation}
where
\begin{equation}\label{3.2}
\displaystyle \Phi\left(A;\overline{A},\overline{u},k\right)=\sqrt{\frac{mk}{\rho(m+1)A_e^{m}}\frac{(A-\overline{A})(A^{m+1}-\overline{A}^{m+1})}{A\overline{A}}}.
\end{equation}
The shock speed is given by
\begin{equation}\label{3.3}
\displaystyle \sigma=\frac{Au-\overline{A}\overline{u}}{A-\overline{A}}.
\end{equation}
We take the directional derivative along the shock trajectory $x=x(t)$ to get
\begin{equation}\label{3.4}
\displaystyle \left(\frac{\partial}{\partial t}+\sigma\frac{\partial}{\partial x}\right)\Gamma=0,
\end{equation}
where $\Gamma=u-\overline{u}-\Phi\left(A;\overline{A},\overline{u},k\right)$.
By taking the limit $t\to t_{0+}$, we have
\begin{equation}\label{3.5}
\frac{\partial A}{\partial t}\to \left(\frac{\partial A}{\partial t}\right)_{*},\quad \frac{\partial u}{\partial t}\to \left(\frac{\partial u}{\partial t}\right)_{*},\quad \frac{\partial \overline{U}}{\partial x}\to U_R^{'},
\end{equation}
and
\begin{equation}\label{3.6}
\left(A,u\right)\to \left(A_*,u_*\right),\quad \left(\overline{A},\overline{u}\right)\to \left(A_R,u_R\right).
\end{equation}
We resolve the shock wave and have the following lemma.
\begin{lem}
Assume that a shock wave associated with $u+c$ moves to the right. Then the limiting values $\left(\partial A/\partial t\right)_{*}$ and $\left(\partial u/\partial t\right)_{*}$  satisfy the linear relation
\begin{equation}\label{3.7}
\displaystyle a_R\left(\frac{\partial u}{\partial t}\right)_{*}+b_R\left(\frac{\partial A}{\partial t}\right)_{*}=d_R,
\end{equation}
where $a_R, b_R$ and $d_R$ are given explicitly by
\begin{equation}\label{3.8}
\left\{\begin{array}{llll}
\displaystyle a_R=1-\frac{\sigma_R }{u_{*}^2-c_{*}^2}\cdot \left(u_{*}+A_*\Phi_1\right), \quad b_R=\frac{\sigma_R}{u_{*}^2-c_{*}^2}\cdot \left(u_{*}\Phi_1+\frac{c_{*}^2}{A_*}\right)-\Phi_1,\\[10pt]
d_R=\displaystyle \left(\sigma_R-u_R\right)u_R^{'}-\frac{k^{'}(0)}{\rho}\cdot \left(\left(\frac{A_R}{A_e}\right)^{m}-1\right)-\frac{mk(0)A_R^{m-1}A_R^{'}}{\rho A_e^{m}}+\Phi_2 \cdot \left((\sigma_R-u_R)A_R^{'}-A_Ru_R^{'}\right)\\[12pt]
\qquad\displaystyle +\frac{\sigma_R k'(0)}{\rho(u_*^{2}-c_*^{2})}\cdot \left(\left(\frac{A_*}{A_e}\right)^{m}-1\right)\cdot \left(A_* \Phi_1+u_*\right)+\sigma_kk'(0)\Phi_k.
\end{array}\right.
\end{equation}
Here $\Phi_1$ and $\Phi_2$ are defined by
\begin{equation}\label{3.9}
\Phi_1=\frac{\partial \Phi}{\partial A}\left(A_*;A_R,u_R,k\right),\quad \Phi_2=\frac{\partial \Phi}{\partial \overline{A}}\left(A_*;A_R,u_R,k\right).
\end{equation}
\begin{proof}
Following \eqref{3.4}, we have
\begin{equation}\label{3.10}
\frac{\partial u}{\partial t}+\sigma_R\frac{\partial u}{\partial x}=\frac{\partial \overline{u}}{\partial t}+\sigma_R\frac{\partial \overline{u}}{\partial x}+\frac{\partial \Phi}{\partial A}\left(\frac{\partial A}{\partial t}+\sigma_R\frac{\partial A}{\partial x}\right)+\frac{\partial \Phi}{\partial \overline{A}}\left(\frac{\partial \overline{A}}{\partial t}+\sigma_R\frac{\partial \overline{A}}{\partial x}\right)+\frac{\partial \Phi}{\partial k}\cdot \sigma k_x.
\end{equation}
 From \eqref{1.1}, we have that
\begin{equation}\label{3.11}
\left\{\begin{array}{ll}
\displaystyle  u_x=\frac{1}{u^2-c^2}\left(\frac{c^2}{A}A_t-uu_t-u\left(\frac{A^m}{\rho A_e^{m}}-\frac{1}{\rho}\right)k_x\right),\\[12pt]
\displaystyle  A_x=\frac{1}{u^2-c^2}\cdot\left(Au_t-uA_t\right)+\frac{A\left(\left(\frac{A}{A_e}\right)^{m}-1\right)}{\rho(u^2-c^2)}k_x.
\end{array}\right.
\end{equation}
We take the limit $t\to 0+$ in \eqref{3.11} by using \eqref{3.5} and \eqref{3.6}. Then we substitute \eqref{3.11} into \eqref{3.10}, which follows  \eqref{3.7}.
\end{proof}
\end{lem}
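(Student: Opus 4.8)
\emph{Proof proposal.} The plan is to differentiate the Rankine--Hugoniot relation \eqref{3.1}--\eqref{3.3} along the shock trajectory $x=x(t)$ and then to convert every spatial derivative into a temporal one by means of the conservation laws \eqref{1.1}, so that in the limit $t\to t_{0+}$ the smooth left state of the shock contributes precisely the two unknowns $(\partial A/\partial t)_*$ and $(\partial u/\partial t)_*$, while the right state contributes only the prescribed initial slopes $A_R',u_R'$. First I would note that $\Gamma:=u-\overline u-\Phi(A;\overline A,\overline u,k)$ vanishes identically on $x=x(t)$, hence the directional identity \eqref{3.4} holds. Here it is essential that $k=\overline k=k(x(t))$ across the shock --- already justified from the continuity of $k$ and the vanishing width of the shock layer, without which the pair \eqref{4.2'} would not close --- so that the only new contribution of the $k$-dependence of $\Phi$ to \eqref{3.4} is the single term $\Phi_k\,\sigma k_x$ with $k_x=k'(x(t))$ known. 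Expanding \eqref{3.4} by the chain rule gives \eqref{3.10}, a linear relation among the one-sided left-state derivatives $u_t,u_x,A_t,A_x$, the one-sided right-state derivatives $\overline u_t,\overline u_x,\overline A_t,\overline A_x$, and $k_x$.

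Next I would eliminate all spatial derivatives. On the smooth left side, solving the two equations of \eqref{1.1} for $(u_x,A_x)$ in terms of $(u_t,A_t,k_x)$ yields exactly \eqref{3.11}; this step uses $u^2-c^2\neq0$, i.e.\ the local non-sonic assumption, which is precisely why $u_*^2-c_*^2$ enters the denominators of \eqref{3.8}. On the right side I would instead use \eqref{1.1} in the moving frame $\partial_t+\sigma\partial_x$ to express $\overline A_t+\sigma\overline A_x$ and $\overline u_t+\sigma\overline u_x$ through $\overline A_x,\overline u_x,k_x$ alone (the pressure gradient splitting into an $\overline A_x$-part and a $k_x$-part); after the limit these produce exactly the data terms $(\sigma_R-u_R)u_R'$, $-\frac{k'(0)}{\rho}((A_R/A_e)^m-1)-\frac{mk(0)A_R^{m-1}A_R'}{\rho A_e^m}$ and $\Phi_2((\sigma_R-u_R)A_R'-A_Ru_R')$ that appear in $d_R$. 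I then pass to the limit $t\to t_{0+}$ using \eqref{3.5}--\eqref{3.6}: $u_t\to(\partial u/\partial t)_*$, $A_t\to(\partial A/\partial t)_*$, $(A,u)\to(A_*,u_*)$, $\overline U_x\to U_R'$, $\sigma\to\sigma_R$, $c\to c_*$, $k\to k(0)$, $k_x\to k'(0)$, and $\partial\Phi/\partial A\to\Phi_1$, $\partial\Phi/\partial\overline A\to\Phi_2$, $\partial\Phi/\partial k\to\Phi_k$, so that every right-state quantity becomes data.

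Finally I would substitute the limiting form of \eqref{3.11} into the limiting form of \eqref{3.10} and collect the terms in the two unknowns: the substitutions for $u_x$ and $A_x$ carry the weights $-\sigma_R u_*/(u_*^2-c_*^2)$ and $-\sigma_R A_*\Phi_1/(u_*^2-c_*^2)$ (together with the $\Phi_1(u_*^2-c_*^2)^{-1}c_*^2/A_*$ pieces), from which the coefficient of $(\partial u/\partial t)_*$ reads off as $a_R$ and that of $(\partial A/\partial t)_*$ as $b_R$, exactly as in \eqref{3.8}; all remaining terms depend only on $A_*,u_*,c_*,\Phi_1,\Phi_2,\Phi_k,\sigma_R$ and on $A_R,u_R,A_R',u_R',k(0),k'(0)$, and assemble into $d_R$. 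The hard part will be the bookkeeping in this last step: keeping the coefficients of the two unknown time derivatives cleanly separated while tracking the $k_x$-terms, which enter both through the substitution \eqref{3.11} on the left state and through the explicit $\Phi_k\,\sigma k_x$ term in \eqref{3.10}, and keeping signs straight through the ubiquitous factor $u_*^2-c_*^2$. Once this is organized, everything else reduces to differentiating the explicit square root in \eqref{3.2}.
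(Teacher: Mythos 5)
Your proposal is correct and follows essentially the same route as the paper: differentiate $\Gamma=u-\overline u-\Phi$ along the shock to get \eqref{3.10}, invert the conservation laws \eqref{1.1} on the starred side to obtain \eqref{3.11}, treat the right state via the smooth PDE so that only $A_R',u_R',k'(0)$ survive, and pass to the limit to collect $a_R,b_R,d_R$. In fact your explicit account of how the right-state terms $(\sigma_R-u_R)u_R'$ and $\Phi_2\bigl((\sigma_R-u_R)A_R'-A_Ru_R'\bigr)$ arise is a step the paper leaves implicit.
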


Combining the above discussion, we now solve the generalized Riemann problem \eqref{1.3} and \eqref{2.6} with the setup of Fig. 2.1, i.e., the rarefaction wave moves to the left and the shock wave moves to the right. We summarize the results in the following propositions.
\begin{prop}
(Nonsonic case) Assume that the $t$-axis is not in the rarefaction wave, then the limiting values $\left(\frac{\partial u}{\partial t}\right)_{*}$ and $\left(\frac{\partial A}{\partial t}\right)_{*}$ are obtained by solving the following pair of linear equations
\begin{equation}\label{3.12}
\left\{\begin{array}{ll}
\displaystyle a_L\left(\frac{\partial u}{\partial t}\right)_{*}+b_L\left(\frac{\partial A}{\partial t}\right)_{*}=d_L,\\[12pt]
\displaystyle a_R\left(\frac{\partial u}{\partial t}\right)_{*}+b_R\left(\frac{\partial A}{\partial t}\right)_{*}=d_R,
\end{array}\right.
\end{equation}
where $a_L,a_R,b_L,b_R,d_L$ and $d_R$ are defined in  Lemma 3.1 and Lemma 4.1. Appendix A gives the other cases. These coefficients depend only on the initial data \eqref{2.6} and the local Riemann solution $R(0;U_L,U_R)$.
\end{prop}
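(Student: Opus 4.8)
The plan is to assemble the two equations of \eqref{3.12} from the two lemmas already proved: the first equation is Lemma~3.1 read off at the trailing characteristic of the rarefaction fan, the second is Lemma~4.1 read off along the shock, and the point to make is that, in the nonsonic case, both relations constrain \emph{the same} pair of one-sided derivatives $(\partial u/\partial t)_*,(\partial A/\partial t)_*$, after which one checks that the resulting $2\times2$ system is nonsingular.

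\emph{Geometry and regularity.} By hypothesis the solution has the configuration of Fig.~2.1 with the $t$-axis outside the rarefaction fan, so $0\notin[\beta_L,\beta_*]$ with $\beta_L<\beta_*<0$, while the shock has speed $\sigma_R>0$; hence the $t$-axis lies strictly inside the open wedge $W=\{\,\beta_* t<x<x(t)\,\}$. By Proposition~2.1 the limiting state along the $t$-axis is $U_*=R^{A}(0;U_L,U_R)=U_1$, and near the origin the only waves of the solution are the rarefaction fan bordering $W$ on the left and the shock bordering it on the right; thus in $W$ the function $U$ is a classical solution of \eqref{1.3} with smooth data, hence $C^1$ up to both lateral boundaries of $W$. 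Consequently the one-sided limits $(\partial u/\partial t)_*$ and $(\partial A/\partial t)_*$ exist and take the same value whether computed along the trailing rarefaction characteristic $x=\beta_* t$, along the $t$-axis, or along the shock curve $x=x(t)$. This common value is the unknown to be determined.

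\emph{The two relations.} I would apply Lemma~3.1 at the single value $\beta=\beta_*=u_*-c_*$, where $(A,u,c)(0,\beta_*)=(A_*,u_*,c_*)$ and $\partial\psi/\partial\alpha(0,\beta_*)$ is the explicit quantity \eqref{2.19}--\eqref{2.20} evaluated at $\theta=c_*/c_L$. By the $C^1$ matching above, $\partial u/\partial t(0,\beta_*)=(\partial u/\partial t)_*$ and $\partial A/\partial t(0,\beta_*)=(\partial A/\partial t)_*$, so \eqref{2.16}--\eqref{2.18} become exactly the first equation of \eqref{3.12}, with $(a_L,b_L)=(1,c_*/A_*)$ and $d_L=d_L(\beta_*)$. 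On the shock side, Lemma~4.1 is already in the required form: \eqref{3.4} is differentiated along $x=x(t)$ and the limit $t\to0+$ is taken, in which $(\bar A,\bar u)\to(A_R,u_R)$, $(A,u)\to(A_*,u_*)$ by \eqref{3.6} and the pre-shock time derivatives tend to $(\partial A/\partial t)_*,(\partial u/\partial t)_*$ by \eqref{3.5}; by the $C^1$ matching these are the same quantities as in the rarefaction relation, so \eqref{3.7}--\eqref{3.8} give the second equation of \eqref{3.12}.

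\emph{Solvability and conclusion.} The remaining point---and the one I expect to be the main obstacle---is that the coefficient determinant $a_Lb_R-a_Rb_L$ is nonzero, so that \eqref{3.12} is uniquely solvable. First, the coefficients \eqref{3.8} are well defined here: the Lax condition for the right-moving $2$-shock gives $u_*+c_*>\sigma_R>0$, while $u_*-c_*=\beta_*<0$, so $u_*^2-c_*^2<0$ and in particular is nonzero. Substituting $(a_L,b_L)=(1,c_*/A_*)$ and \eqref{3.8}, the determinant simplifies to an expression whose non-vanishing then rests on $u_*^2\ne c_*^2$ together with the strict monotonicity of $\Phi(\cdot;A_R,u_R,k)$ built into \eqref{3.2}, which fixes the sign of $\Phi_1$; this sign/nondegeneracy computation is the delicate part, and it parallels the corresponding verification for the Euler and shallow-water GRP in \cite{BenL1}. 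Granting it, \eqref{3.12} determines $(\partial u/\partial t)_*$ and $(\partial A/\partial t)_*$ uniquely, and since every coefficient in Lemmas~3.1 and 4.1 is assembled solely from the data $U_L,U_R,U_L',U_R'$ and the Riemann solution $R^{A}(0;U_L,U_R)$, so is the solution. The other wave patterns (two shocks, two rarefactions, and the sonic cases) are handled in the same way, replacing the appropriate row of \eqref{3.12}, as collected in Appendix~A.
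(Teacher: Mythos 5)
Your proposal is correct and follows essentially the same route as the paper, which treats this proposition as an immediate assembly of Lemma~3.1 (evaluated at the trailing characteristic $\beta=\beta_*$) and Lemma~4.1 into one $2\times2$ system for the common limiting derivatives at the $t$-axis, with no further proof given. Your additional remarks on the nonsingularity of the determinant $a_Lb_R-a_Rb_L$ go beyond what the paper addresses (it is left unverified there as well), and you rightly flag that computation as the only genuinely unfinished step.
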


When the $t$-axis is also a characteristics in the rarefaction wave, we have a sonic case, the result is shown in the following proposition.
\begin{prop}
(Sonic case) Assume that the $t-$axis is in the rarefaction wave associated with $u-c$ characteristic field. Then we have
\begin{equation}\label{3.13}
\left\{\begin{array}{ll}
\displaystyle  \left(\frac{\partial u}{\partial t}\right)_{*}=\frac12\left(d_L(0)+k'(0)\cdot\left(\frac1{\rho}-\frac{u_{*}c_{*}}{mk(0)}\right)\right),\\[10pt]
\displaystyle  \left(\frac{\partial A}{\partial t}\right)_{*}=\frac{A_{*}}{2c_{*}}\left(d_L(0)-k'(0)\cdot\left(\frac1{\rho}-\frac{u_{*}c_{*}}{mk(0)}\right)\right),
\end{array}\right.
\end{equation}
where $d_L(0)$ is defined in Lemma 3.1.
\end{prop}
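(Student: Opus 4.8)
The goal is to produce two independent linear relations between $\left(\partial u/\partial t\right)_*$ and $\left(\partial A/\partial t\right)_*$ and then solve the resulting $2\times2$ system, the point being that in the sonic case the shock relation of Lemma~4.1 is no longer available and must be replaced by information coming from inside the fan. The starting remark is that ``the $t$-axis lies in the $u-c$ rarefaction wave'' means precisely that the $t$-axis is the characteristic $C^{\lambda_1}$ of the fan along which $\lambda_1=u-c=0$; in the characteristic coordinates $(\alpha,\beta)$ of Section~3 this is the curve $\beta=0$, and the limit $t\to0+$ is $\alpha\to0$ (cf.\ \eqref{2.14}). Hence $u_*=c_*$ and $U_*=(A_*,u_*)$ is the value at $(\alpha,\beta)=(0,0)$. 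In particular $d_L(0)$ is well defined: in \eqref{2.18} the factor $(\psi_L-\beta)^{(m+2)/(2m)}$ is finite at $\beta=0$, the term carrying the prefactor $\beta/(2c)$ drops out, and $\partial\psi/\partial\alpha(0,0)$ is the explicit quantity \eqref{2.19}--\eqref{2.20}, so that $d_L(0)=\psi_L^{(m+2)/(2m)}\,\partial\psi/\partial\alpha(0,0)$.

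The first relation is Lemma~3.1 evaluated at $\beta=0$. Since $\left(a_L(0,0),b_L(0,0)\right)=\left(1,\,c_*/A_*\right)$ by \eqref{2.17}, equation \eqref{2.16} becomes
\[
\Big(\frac{\partial u}{\partial t}\Big)_*+\frac{c_*}{A_*}\Big(\frac{\partial A}{\partial t}\Big)_*=d_L(0),
\]
which is just the statement $\left(\partial\psi/\partial t\right)_*=d_L(0)$, because $\partial\psi/\partial t=\partial u/\partial t+(c/A)\,\partial A/\partial t$ along the $t$-axis, where $x$ and hence $k=k(0)$ are fixed and $c^2=(mk/\rho)(A/A_e)^m$ gives $\partial c/\partial t=(mc/2A)\,\partial A/\partial t$.

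For the second relation I would turn to the $\phi$-component of the weakly coupled system \eqref{2.5}, which was dormant in the nonsonic analysis. Along the $t$-axis the transport speed $\lambda_1=u-c$ vanishes, so the $\phi$-equation degenerates to the identity $\left(\partial\phi/\partial t\right)_*=B_1(0,0)$. Converting through $\phi=u-\frac{2}{m}c$ as above gives $\partial\phi/\partial t=\partial u/\partial t-(c/A)\,\partial A/\partial t$ on the $t$-axis, while $B_1(0,0)=\frac{k'(0)}{mk(0)}\big(\frac{mk(0)}{\rho}-u_*c_*\big)=k'(0)\big(\frac1\rho-\frac{u_*c_*}{mk(0)}\big)$; hence
\[
\Big(\frac{\partial u}{\partial t}\Big)_*-\frac{c_*}{A_*}\Big(\frac{\partial A}{\partial t}\Big)_*=k'(0)\Big(\frac1\rho-\frac{u_*c_*}{mk(0)}\Big).
\]
Adding and subtracting the two displayed relations, and dividing by $2$ (respectively multiplying by $A_*/(2c_*)$), reproduces \eqref{3.13}.

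The step that needs the most care is the degenerate identity $\left(\partial\phi/\partial t\right)_*=B_1(0,0)$: as the fan collapses $\partial\phi/\partial x$ is of order $1/t$, so $``\partial\phi/\partial t+\lambda_1\,\partial\phi/\partial x=B_1$ evaluated on the sonic line'' is a genuine limiting statement rather than a naive substitution of $\lambda_1=0$. I would make it rigorous by staying in the characteristic coordinates: \eqref{2.12} gives $\partial\phi/\partial\alpha=(\partial t/\partial\alpha)\,B_1$, while the chain rule together with $\partial x/\partial\alpha=\lambda_1\,\partial t/\partial\alpha$ (also from \eqref{2.12}) gives $\partial\phi/\partial\alpha=(\partial t/\partial\alpha)\big(\partial\phi/\partial t+\lambda_1\,\partial\phi/\partial x\big)$; at $(\alpha,\beta)=(0,0)$ one has $\lambda_1=0$ and $(\partial t/\partial\alpha)(0,0)=\psi_L^{-(m+2)/(2m)}\neq0$ by \eqref{2.14}--\eqref{2.15}, which forces $\left(\partial\phi/\partial t\right)(0,0)=B_1(0,0)$ once the product $\lambda_1\,\partial\phi/\partial x$ is shown to extend continuously to $0$ at the sonic vertex; this last point is treated exactly as in the sonic analysis of \cite{BenL1,BenL2}, and one invokes Proposition~2.1 so that near the origin the wave pattern is that of the associated Riemann solution, legitimizing the identification of the $t$-axis with the $\beta=0$ characteristic in the first place.
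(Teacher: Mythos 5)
Your argument is correct and is essentially the paper's own proof: the first relation is Lemma~3.1 at $\beta=0$ (i.e.\ $(\partial\psi/\partial t)_*=d_L(0)$), the second is the degenerate $\phi$-equation $(\partial\phi/\partial t)_*=B_1(0,0)$ using $u_*-c_*=0$, and the system is solved by adding and subtracting, exactly as in \eqref{3.14}--\eqref{3.16}. The only difference is that you explicitly flag and justify (via the characteristic coordinates and \eqref{2.12}, \eqref{2.14}--\eqref{2.15}) the $0\cdot\infty$ issue in the term $(u_*-c_*)(\partial\phi/\partial x)_*$, which the paper simply asserts; this is a welcome refinement rather than a different method.
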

\begin{proof}
On the one hand, from \eqref{2.3}, we have that
\begin{equation}\label{3.14}
\frac{\partial \psi}{\partial t}=\frac{\partial u}{\partial t}+\frac{c}{A}\cdot \frac{\partial A}{\partial t}, \quad \frac{\partial \phi}{\partial t}=\frac{\partial u}{\partial t}-\frac{c}{A}\cdot \frac{\partial A}{\partial t}.
\end{equation}
Since in the sonic case, we have $\beta=u_{*}-c_{*}\to 0$ as $t\to 0_{+}$, so $\left(\partial \psi/\partial t\right)_{*}=d_L(0)$. Thus we get
\begin{equation}\label{3.15}
\left(\frac{\partial u}{\partial t}\right)_{*}+\frac{c_{*}}{A_{*}} \cdot\left( \frac{\partial A}{\partial t}\right)_{*}=d_L(0).
\end{equation}
On the other hand, at the origin, we have
\begin{equation}\label{3.16}
\left(\frac{\partial \phi}{\partial t}\right)_{*}=\left(\frac{\partial \phi}{\partial t}\right)_{*}+(u_{*}-c_{*})\cdot \left(\frac{\partial \phi}{\partial x}\right)_{*}=\left(\frac1{\rho}-\frac{u_{*}c_{*}}{mk(0)}\right)k'(0).
\end{equation}
We combine \eqref{3.14}-\eqref{3.16}, and finally obtain \eqref{3.13}.
\end{proof}

\section{The acoustic case}
In this section, we consider a special case: the acoustic case. Assume that the initial data $U_L=U_R$, but we allow jumps in their slopes $U'_L\neq U'_R$. Obviously, the associated Riemann solution is now constant:
\begin{equation}\label{5.1'}
\displaystyle U(x,t)=U_L=U_R.
\end{equation}
Hence the nonlinear waves degenerate to a characteristic curve whose two side states are the same. The acoustic case can be viewed as a proper linearization of the nonlinear system \cite{BenL1}. We have the following lemma.
\begin{lem}
When $U_L=U_R$ and $U'_L\neq U'_R$, we have the acoustic case. $(\partial u/\partial t)_{*}$ and $(\partial A/\partial t)_{*}$ are given by
\begin{equation}\label{5.1}
\left\{\begin{array}{ll}
\displaystyle  \left(\frac{\partial u}{\partial t}\right)_{*}=\displaystyle -\frac{u_*}{2A_*}\left[A_Lu'_L+A_Ru'_R+c_*(A'_L-A'_R)\right]-\frac{c_*}{2A_*}\left[A_Lu'_L-A_Ru'_R+c_*(A'_L+A'_R)\right]\\[12pt]
\qquad \qquad  \displaystyle -\frac{k'(0)}{\rho}\left(\left(\frac{A_*}{A_e}\right)^{m}-1\right),\\[12pt]
\displaystyle  \left(\frac{\partial A}{\partial t}\right)_{*}=\displaystyle -\frac12\left[A_Lu'_L+A_Ru'_R+c_*(A'_L-A'_R)\right]-\frac{u_*}{2c_*}\left[A_Lu'_L-A_Ru'_R+c_*(A'_L+A'_R)\right].
\end{array}\right.
\end{equation}
\end{lem}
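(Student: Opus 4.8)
The plan is to recognize the acoustic case as the specialization of the nonsonic Proposition~4.2 to $U_L=U_R$. The associated Riemann solution is then the constant $U_*=U_L=U_R$, so that $u_L=u_R=u_*$, $A_L=A_R=A_*$, $c_L=c_R=c_*$, and both elementary waves of Fig.~2.1 degenerate to the characteristic lines $x=(u_*-c_*)t$ and $x=(u_*+c_*)t$ through the origin; it therefore suffices to simplify the coefficients $a_L,b_L,d_L$ of Lemma~3.1 and $a_R,b_R,d_R$ of Lemma~4.1 in this limit and then solve the $2\times2$ system \eqref{3.12}. I work in the subsonic configuration $u_*-c_*<0<u_*+c_*$, so that the $t$-axis lies in the intermediate region and $U_*$ is the limiting state at $x=0$; this is what makes $\psi$ the invariant carried in from the left across the (degenerate) family-$1$ wave and $\phi$ the one carried in from the right across the family-$2$ wave.

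For the left coefficients I would specialize Lemma~3.1 to $\beta_L=\beta_*=u_*-c_*$. Then $\theta=c(0,\beta_*)/c_L=1$, so every factor $\theta^{s}-1$ in \eqref{2.19} vanishes and $\partial\psi/\partial\alpha(0,\beta_*)=\partial\psi/\partial\alpha(0,\beta_L)$ is given by \eqref{2.20}. Since $\psi_L-\beta_*=\tfrac{m+2}{m}c_*$ and $c_L=c_*$, the prefactor of \eqref{2.20} cancels the factor $(\psi_L-\beta)^{\frac{m+2}{2m}}$ in \eqref{2.18}, and with $(a_L,b_L)=(1,c_*/A_*)$ and \eqref{3.14} the relation $a_L(\partial u/\partial t)_*+b_L(\partial A/\partial t)_*=d_L$ collapses to
\begin{equation}\label{eq-acL}
\left(\frac{\partial u}{\partial t}\right)_*+\frac{c_*}{A_*}\left(\frac{\partial A}{\partial t}\right)_*=B_2(0)-(u_*+c_*)\,\psi'_L,
\end{equation}
with $B_2(0)=k'(0)\big(\tfrac1\rho+\tfrac{u_*c_*}{mk(0)}\big)$ and $\psi'_L=u'_L+\tfrac{c_*}{A_*}A'_L+\tfrac{c_*}{mk(0)}k'(0)$; this is exactly the $\psi$-equation of \eqref{2.5} at $(0,0^+)$ with the $x$-slope of $\psi$ inherited through the zero-width family-$1$ wave from the left data.

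The right coefficients are handled symmetrically. In the acoustic case the family-$2$ wave also has zero width, so $\phi$ and its $x$-slope pass through it unchanged (cf.\ \cite{BenL1}), and the $\phi$-equation of \eqref{2.5} at $(0,0^+)$ gives
\begin{equation}\label{eq-acR}
\left(\frac{\partial u}{\partial t}\right)_*-\frac{c_*}{A_*}\left(\frac{\partial A}{\partial t}\right)_*=B_1(0)-(u_*-c_*)\,\phi'_R,
\end{equation}
with $B_1(0)=k'(0)\big(\tfrac1\rho-\tfrac{u_*c_*}{mk(0)}\big)$ and $\phi'_R=u'_R-\tfrac{c_*}{A_*}A'_R-\tfrac{c_*}{mk(0)}k'(0)$. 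The same identity can be read off from Lemma~4.1 by sending the shock strength to zero ($\sigma_R\to u_*+c_*$, $\Phi_1\to c_*/A_*$, and the finite limits of the other coefficients in \eqref{3.8}). I expect this zero-strength limit of \eqref{3.8} to be the only genuine obstacle --- it is a somewhat delicate piece of bookkeeping --- which is precisely why the characteristic argument is preferable here.

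It then remains to solve the linear system \eqref{eq-acL}--\eqref{eq-acR}. Adding the two equations gives $2(\partial u/\partial t)_*$, and subtracting them (after multiplying by $A_*/c_*$) gives $\tfrac{2c_*}{A_*}(\partial A/\partial t)_*$. Substituting $B_1(0),B_2(0),\psi'_L,\phi'_R$ and using $c_*^2=\tfrac{mk(0)}{\rho}(A_*/A_e)^m$, the $k'(0)$-terms from $\psi'_L$ and $\phi'_R$ combine with $B_1(0)+B_2(0)=\tfrac{2k'(0)}{\rho}$ to leave exactly $-\tfrac{k'(0)}{\rho}\big((A_*/A_e)^m-1\big)$ in the $u$-equation, and combine with $B_2(0)-B_1(0)=\tfrac{2u_*c_*}{mk(0)}k'(0)$ to cancel completely in the $A$-equation. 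Finally, regrouping the remaining terms (using $A_L=A_R=A_*$) into the combinations $A_Lu'_L\pm A_Ru'_R+c_*(A'_L\mp A'_R)$ produces precisely \eqref{5.1}. Everything after \eqref{eq-acL}--\eqref{eq-acR} is routine algebra; the one point to watch is staying on the subsonic branch, so that the roles of $\phi$ and $\psi$ are as claimed.
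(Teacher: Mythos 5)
Your proof is correct and reaches \eqref{5.1}, but it travels a slightly different road than the paper's. The paper never invokes the Riemann invariants in this proof: it uses continuity of the solution across the two degenerate characteristics $\Gamma_{\pm}$ to equate the tangential derivative of $A$ along each of them from both sides, which after substituting $A_t=-(Au)_x$ gives the two linear relations \eqref{5.4}--\eqref{5.5} for the spatial slopes $(u_x,A_x)$ in the intermediate region; these are solved in \eqref{5.6} and converted to time derivatives through \eqref{5.7}. Your two relations are exactly equivalent to those: rewriting them in terms of $x$-derivatives via \eqref{2.5}, the statements ``$\psi_x$ is inherited from the left data'' and ``$\phi_x$ is inherited from the right data'' reduce, using $A_L=A_R=A_*$, to $A_*u_x+c_*A_x=A_Lu_L'+c_*A_L'$ and $A_*u_x-c_*A_x=A_Ru_R'-c_*A_R'$, i.e.\ precisely \eqref{5.4}--\eqref{5.5}. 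What your route buys is consistency with the rest of the scheme: you obtain the left relation as the literal degeneration $\beta_L\to\beta_*$, $\theta\to 1$ of Lemma 3.1 (your identity $d_L(\beta_*)=B_2(0)-(u_*+c_*)\psi_L'$ does check out against \eqref{2.18}--\eqref{2.20}, since $\psi_L-\beta_*=\tfrac{m+2}{m}c_*$ and $\beta_*+c_*=u_*$), so the acoustic case is exhibited as a continuous limit of the nonsonic Proposition 4.2 rather than derived ad hoc; the paper's argument is more elementary and avoids the zero-strength shock limit entirely. The one step you should make explicit is why $\phi_x$ (and $\psi_x$) pass through the zero-width waves unchanged: the clean argument is that the tangential derivative of $\phi$ along $\Gamma_+$, namely $\phi_t+\lambda_2\phi_x=B_1+2c\,\phi_x$, is computed identically from both sides because the solution is continuous across $\Gamma_+$, and since $B_1$ and $c$ are continuous there, so is $\phi_x$ in the limit $t\to 0_+$. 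With that supplied, your remaining algebra is routine and reproduces \eqref{5.1}, including the source term $-\frac{k'(0)}{\rho}\left(\left(A_*/A_e\right)^{m}-1\right)$ via $c_*^2=\frac{mk(0)}{\rho}\left(A_*/A_e\right)^{m}$.
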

\begin{proof}
In the acoustic case, the nonlinear wave degenerates to a characteristic curve. Denote $\Gamma_-$ ($\Gamma_+$) as $u-c$ ($u+c$) characteristic curve. Then the solution is continuous across $\Gamma_-$ and $\Gamma_+$. First, we resolve the states across $\Gamma_-$. Denote $U_-(x,t)$ and $U_1(x,t)$ as the left hand side and the right hand side of $\Gamma_-$, respectively.  We take the differentiation along it for the variable $A$, and get
\begin{equation}\label{5.2}
\displaystyle  \frac{\partial A_-}{\partial t}+(u-c)\frac{\partial A_-}{\partial x}= \frac{\partial A_1}{\partial t}+(u-c)\frac{\partial A_1}{\partial x}.
\end{equation}
By using \eqref{1.3}, we have
\begin{equation}\label{5.3}
-A_-(u_-)_x-c(A_-)_x=-Au_x-cA_x.
\end{equation}
Taking the limit $t\to 0_{+}$ in \eqref{5.3}, we have
\begin{equation}\label{5.4}
-A_Lu'_L-c_*A'_L=-A_*u_x-c_*A_x.
\end{equation}
Similarly, by resolving the acoustic wave along the characteristic wave $\Gamma_+$, we have
\begin{equation}\label{5.5}
-A_Ru'_R+c_*A'_R=-A_*u_x+c_*A_x.
\end{equation}
By solving \eqref{5.4} and \eqref{5.5}, we have that
\begin{equation}\label{5.6}
\left\{\begin{array}{ll}
\displaystyle  A_x=\frac{1}{2c_*}\left(A'_Lu'_L-A_Ru'_R+c_*(A'_L+A'_R)\right),\\[9pt]
\displaystyle  u_x=\frac{1}{2A_*}\left(A_Lu'_L+A_Ru'_R+c_*(A'_L-A'_R)\right).\\
\end{array}\right.
\end{equation}
We use \eqref{1.3} again to get
\begin{equation}\label{5.7}
\left\{\begin{array}{ll}
\displaystyle  A_t=-Au_x-uA_x,\\[9pt]
\displaystyle  u_t=-uu_x-\frac{k_x}{\rho}\left(\left(\frac{A}{A_e}\right)^{m}-1\right)-\frac{c^2}{A}A_x.
\end{array}\right.
\end{equation}
By substituting \eqref{5.6} into \eqref{5.7}, we finally obtain \eqref{5.1}. Thus we prove the lemma.
\end{proof}

\section{Two-dimensional extension}
In this section, we extend the GRP to the two-dimensional blood flow equations. When we consider the blood flow in the excess of fats, cholesterol plaques and blood clots, or in some blood diseases like polycythemia,  which refers to the blood flow in porous medium, it is better to use two dimensional blood flow model. See \cite{Ullah} for more details. For simplicity, we assume that the coefficient $k(x)$ is constant here. Then the extended two-dimensional model is given by
\begin{equation}\label{6.1}
\left\{\begin{array}{l}
A_t+(A u)_x+(Av)_y=0,\\[3pt]
\displaystyle (A u)_t+(Au^2+\overline{p})_x+(Auv)_y=0,\\[3pt]
\displaystyle (A v)_t+(Auv)_x+(Av^2+\overline{p})_y=0.
\end{array}\right.
\end{equation}
Here
\begin{equation}\label{6.2}
 \overline{p}=\frac{mk}{\rho(m+1)}\frac{A^{m+1}}{A_e^{m}}.
\end{equation}
Following \cite{BenL1}, we use the Strang splitting method (see \cite{Strang}) to split \eqref{6.1} into two one dimensional subsystems,
\begin{equation}\label{6.3}
\begin{array}{lll}
\left\{\begin{array}{ll}
A_t+(Au)_x=0,\\[3pt]
(Au)_t+(Au^2+\overline{p})_x=0, \\[3pt]
(Av)_t+(Auv)_x=0, \end{array} \right.
\ \text{and}\
\left\{\begin{array}{ll}
A_t+(Av)_y=0,\\[3pt]
(Au)_t+(Auv)_y=0, \\[3pt]
(Av)_t+(Av^2+\overline{p})_y=0. \end{array} \right.
\end{array}
\end{equation}
The 2D Strang splitting method is given by
\begin{equation}\label{6.4}
U^{n+1}=\mathcal{L}_x\left(\frac{\Delta t}{2}\right)\mathcal{L}_y\left(\Delta t\right)\mathcal{L}_x\left(\frac{\Delta t}{2}\right)U^{n}.
\end{equation}
Thus we only need to resolve the GRP for the velocity component $v$. Note that $\partial v/\partial t+u\cdot\partial v/\partial t=0$ from \eqref{6.3}.  We  have  the following lemma.
\begin{lem}
Assume that a rarefaction wave moves to the left and a shock wave moves to the right, $t-$axis is in the intermediate region, see Fig. 2.1. Then

1). If $u_*>0$, the value $(\partial v/\partial t)_{*}$ is computed from the rarefaction wave side by:
\begin{equation}\label{6.5}
\left(\frac{\partial v}{\partial t}\right)_{*}=-u_{*}\cdot \frac{\rho_{*}}{\rho_L}\cdot v'_{L}.
\end{equation}

2). If $u_*<0$, the value $(\partial v/\partial t)_{*}$ is computed from the shock wave side by:
\begin{equation}\label{6.6}
\displaystyle \left(\frac{\partial v}{\partial t}\right)_{*}=-\frac{u_{*}(\sigma_R-u_R)}{\sigma_R-u_*}v'_R.
\end{equation}
\end{lem}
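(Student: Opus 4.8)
The plan is to reduce the lemma to a single scalar transport equation for $v$ and then to identify one spatial derivative at the $t$-axis. From the second subsystem in \eqref{6.3}, subtracting $v$ times $A_t+(Au)_x=0$ from $(Av)_t+(Auv)_x=0$ gives $A\bigl(v_t+uv_x\bigr)=0$, hence $v_t+uv_x=0$ in every smooth region. By hypothesis the $t$-axis lies in the intermediate constant state, which is separated from the left rarefaction fan and from the right shock, so the solution is $C^1$ there for $t>0$; letting $t\to0+$ along $x=0$ in the transport equation gives $(\partial v/\partial t)_{*}=-u_{*}(\partial v/\partial x)_{*}$. It therefore suffices to compute $(\partial v/\partial x)_{*}$, and the sign of $u_{*}$ dictates whether the particle path reaching the $t$-axis samples the (smooth) left data across the rarefaction wave or the right data across the shock.

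\emph{The case $u_*>0$.} Here the particle path arriving at the $t$-axis emanates from $x<0$, where $U$ is smooth throughout the rarefaction fan. I would pass to the Lagrangian label $\xi$, the foot at $t=0$ of the particle path through $(x,t)$, and use that $v$ is constant along particle paths, so $v(x,t)=v_L+v'_L\,\xi(x,t)$ and hence $(\partial v/\partial x)_{*}=v'_L\,(\partial\xi/\partial x)_{*}$. The Jacobian $J=\partial x/\partial\xi$ satisfies $\frac{D}{Dt}J=u_xJ$ along particle paths, while $A_t+(Au)_x=0$ gives $\frac{D}{Dt}A=-Au_x$; thus $AJ$ is a particle-path invariant, $AJ=A(\xi,0)$, so $\partial\xi/\partial x=A(x,t)/A(\xi,0)$. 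Letting $t\to0+$ at $x=0$ yields $A(x,t)\to A_*$ and $\xi\to0^-$, $A(\xi,0)\to A_L$, so $(\partial v/\partial x)_{*}=(A_*/A_L)\,v'_L$, which is the ratio written $\rho_*/\rho_L$ in \eqref{6.5} ($A$ being the density of the reduced one-dimensional system), and multiplication by $-u_*$ yields \eqref{6.5}.

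\emph{The case $u_*<0$.} Now the particle path through the origin comes from $x>0$ and crosses the shock $x=x(t)$. First I would check that $v$ is continuous across the shock: the Rankine--Hugoniot relations for \eqref{6.3} are $\sigma[A]=[Au]$ and $\sigma[Av]=[Auv]$, and $v=\bar v$ makes the second an identity once the first holds, so the shock carries no jump in $v$. Next, exactly as in \eqref{3.4}, I would differentiate $v-\bar v=0$ along the shock trajectory, $(\partial_t+\sigma\partial_x)(v-\bar v)=0$, insert $v_t=-uv_x$ and $\bar v_t=-\bar u\bar v_x$, and take $t\to0+$ with $u\to u_*$, $\bar u\to u_R$, $\sigma\to\sigma_R$, $\partial\bar v/\partial x\to v'_R$. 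This gives $(\sigma_R-u_*)(\partial v/\partial x)_{*}=(\sigma_R-u_R)\,v'_R$, and then $(\partial v/\partial t)_{*}=-u_{*}(\partial v/\partial x)_{*}$ is precisely \eqref{6.6}.

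The main obstacle is the rarefaction case: one must justify that the particle-path map is a genuine $C^1$ change of variables up to the singular corner $(0,0)$ and that the one-sided limits $A(x,t)\to A_*$, $A(\xi,0)\to A_L$ hold along it. This rests on the characteristic-coordinate framework of Section~3 --- in particular the explicit regularity recorded in \eqref{2.14}--\eqref{2.15} --- rather than on any new estimate. The shock case and the reduction to the transport equation are then routine; one should also note the degenerate value $u_*=0$, where both formulas give $(\partial v/\partial t)_{*}=0$.
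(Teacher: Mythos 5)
Your proof is correct and follows essentially the same route as the argument this paper defers to in the cited reference (the paper itself gives no proof, only ``we refer to [BenL1]''): reduce to the transport equation $v_t+uv_x=0$, obtain $(\partial v/\partial x)_*$ through the rarefaction from the particle-path Jacobian identity $AJ=A(\xi,0)$, and through the shock from the continuity of $v$ across it plus the directional derivative along the shock trajectory. Your reading of the factor $\rho_*/\rho_L$ as $A_*/A_L$ is the right one---here $\rho$ is a constant and $A$ plays the role of the density in the continuity equation, so the stated formula is inherited verbatim from the Euler case---and the mass-flux relation $A_*(\sigma_R-u_*)=A_R(\sigma_R-u_R)$ shows \eqref{6.6} equals $-u_*(A_*/A_R)v_R'$, confirming the symmetry between the two cases.
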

For the proof of lemma 6.1, we refer to \cite{BenL1} and not repeat here.

\section{Implementation of the GRP scheme}
In this section we  describe the implementation of GRP scheme by the following four steps.

\noindent
{\bf Step 1.} Given piecewise linear initial data
\begin{equation}\label{7.1}
U^{n}(x)=U_j^{n}+\sigma_j^{n}(x-x_j),\quad x\in (x_{j-1/2},x_{j+1/2}),
\end{equation}
we solve the Riemann problem of \eqref{2.7} to define the Riemann solution
\begin{equation}\label{7.2}
U_{j+1/2}^{n}=R^{A}\left(0;U_j^{n}+\frac{\Delta x}{2}\sigma_j^{n},U_{j+1}^{n}-\frac{\Delta x}{2}\sigma_{j+1}^{n}\right).
\end{equation}

\noindent
{\bf Step 2.} Determine $\left(\partial U/\partial t\right)_{j+1/2}^{n}$. This is obtained by solving a linear algebraic equations according to Lemma 3.1, Lemma 4.1 and Lemma 5.1. The other cases are summarized in Appendix A.

\noindent
{\bf Step 3.} Evaluate the new cell averages $U_j^{n+1}$ using \eqref{1.6} and \eqref{1.8}.

\noindent
{\bf Step 4.} Update the slope by the following procedure,
\begin{equation}\label{7.3}
\left\{\begin{array}{ll}
\displaystyle U_{j+1/2}^{n+1,-}=U_{j+1/2}^{n}+\Delta t\left(\frac{\partial U}{\partial t}\right)_{j+1/2}^{n},\\[12pt]
\displaystyle \sigma_{j}^{n+1,-}=\frac1{\Delta x}(\Delta U)_{j}^{n+1,-}:=\frac1{\Delta x}\left(U_{j+1/2}^{n+1,-}-U_{j-1/2}^{n+1,-}\right).
\end{array}\right.
\end{equation}
In order to suppress local oscillations near discontinuities, we modify $\sigma_{j}^{n+1,-}$ by a monotonicity algorithm to get $\sigma_{j}^{n+1}$, see \cite{BenF1,BenL2},
\begin{equation}\label{7.4}
\sigma_j^{n+1}={\rm minmod}\left(\alpha \frac{U_j^{n+1}-U_{j-1}^{n+1}}{\Delta x}, \sigma_j^{n+1,-}, \alpha \frac{U_{j+1}^{n+1}-U_j^{n+1}}{\Delta x}\right).
\end{equation}
In section 8, we take $\alpha=0.9$ for our tests.

\section{Numerical examples}
In this section we give some numerical examples to study the convergence rate and test the effectiveness of the GRP  scheme for blood flow model \eqref{1.1}. We also compare the results with the second order Godunov scheme. The average blood  density is taken as
$\rho=1.05 g/cm^{3}$, the equilibrium state $A_e=1.0 cm^{2}$. We note here that the unit for vessel diameter is $cm^{2}$ and the velocity is $cm/s$. For the coefficient $k(x)$,  we take the unit as $g/cm/s^{2}$. The CFL constant is 0.5 here.

\subsection{\emph{One-dimensional examples}} \

\noindent
{\bf Example 1} (Empirical convergence rates).  We first carry out an empirical convergence rate study of the derived GRP scheme. In \cite{Muller2}, the authors implement the convergence rate studies on well-balanced higher order schemes using the method of manufactured solutions. Here follow \cite{Muller2}, we manufacture an exact reference solution of system (1.1) which is given by 
\begin{equation}\label{8.1}
\hat{A}(x,t)=A_0+a_0sin(2\frac{\pi}{L}x)cos(2\frac{\pi}{T_0}t)
\end{equation}
for cross-sectional area $A(x,t)$ and 
\begin{equation}\label{8.2}
\hat{q}(x,t)=q_0-\frac{a_0L}{T_0}cos(2\frac{\pi}{L}x)sin(2\frac{\pi}{T_0}t)
\end{equation}
for the mass flux $q(x,t)=A(x,t)u(x,t)$. $k(x)$ in \eqref{1.2} is described by 
 \begin{equation}\label{8.3}
\hat{k}(x)=K_0+k_0sin(2\frac{\pi}{L}x).
\end{equation}
As shown in \cite{Muller2}, \eqref{8.1}-\eqref{8.3} are not solutions of the original system, but are exact solutions of the following modified system 
\begin{equation}\label{8.4}
{\rm \partial}_t U+{\rm \partial}_x F(U)+L(U){\rm \partial}_x G(x)=\widetilde{B}(x,t)
\end{equation}
with an extra source term $\widetilde{B}(x,t)$. System \eqref{8.4} is used to carry out the convergence rate study.

\begin{figure}[htbp]
\centering
\includegraphics[width=0.65\textwidth]{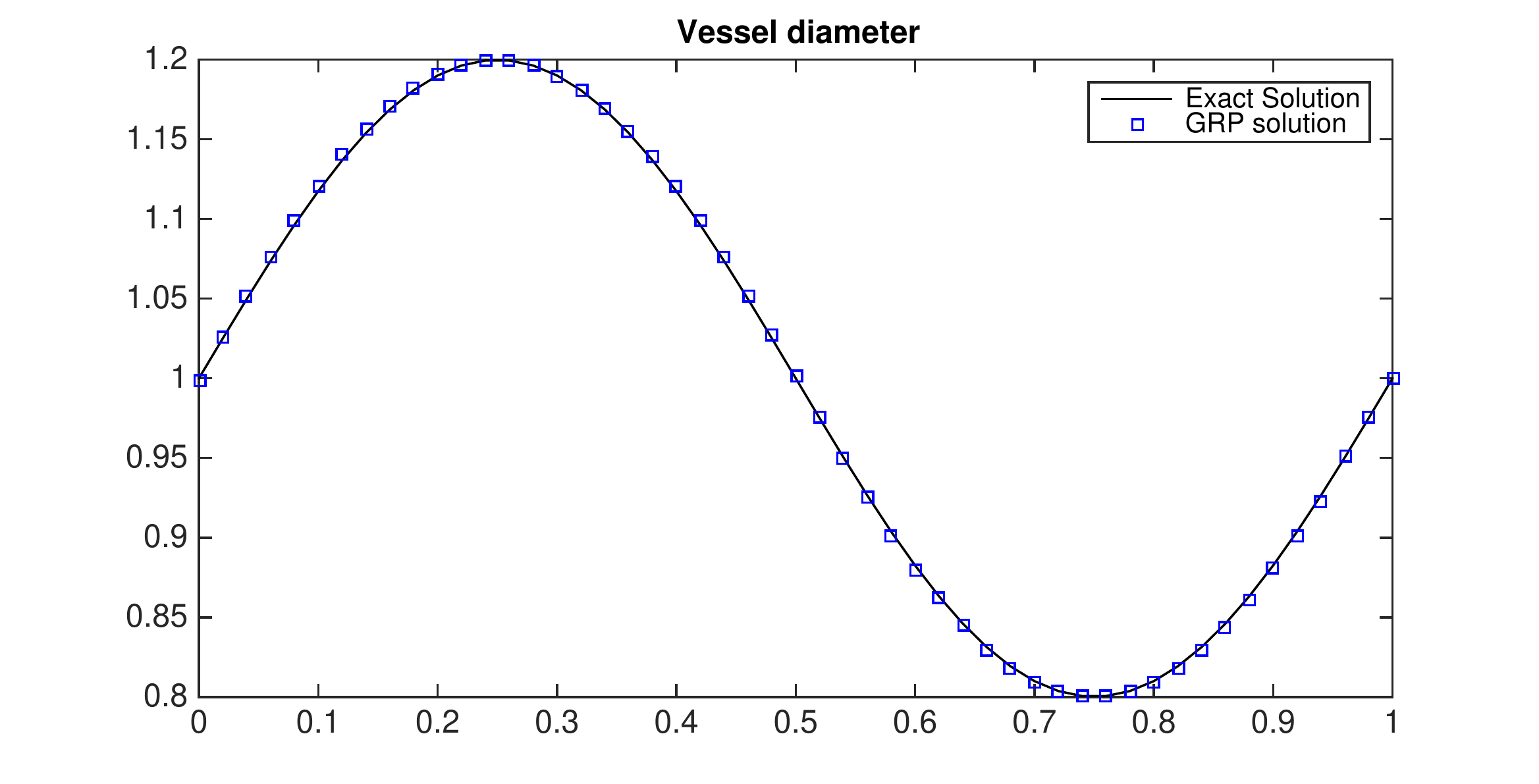}
\caption*{Fig. 8.1. The GRP solution of vessel diameter in the convergence rate study with 51 cells at time t=0.025s. The parameter $K_0=6.0$.}
\end{figure}
\begin{table*}[]
\scriptsize
\centering
\caption{The $L^{\infty}$ errors of vessel diameter $A(x,t)$ and empirical convergence rates for the GRP solver.}
\label{Tab03}
\begin{tabular}{cccccccccc} 
\toprule
 \multirow{2}{*}{$K_0$}& \multirow{2}{*}{$t_0$$(s)$}  & \multicolumn{2}{c}{$t=t_0$}  &  \multicolumn{2}{c}{$t=t_0/2$}  & \multicolumn{2}{c}{$t=t_0/4$} & \multicolumn{2}{c}{$t=t_0/8$}   \\
\cmidrule(r){3-4} \cmidrule(r){5-6} \cmidrule(r){7-8} \cmidrule(r){9-10}
& &  ${\rm Error}$ \quad  \quad &   ${\rm Order}$ \quad \quad
&  ${\rm Error}$  \quad \quad &   ${\rm Order}$ \quad \quad
&  ${\rm Error}$  \quad \quad &   ${\rm Order}$ \quad \quad
&  ${\rm Error}$ \quad \quad  &   ${\rm Order}$  \\
\midrule
$6.0 $     \qquad      & 0.1                \qquad             & 5.632e-02            \qquad          & --           \qquad          & 1.296e-02         \qquad     & 2.12      \qquad      & 3.068e-03         \qquad     & 2.08    \qquad     &7.465e-04 \qquad     & 2.04     \\ [3pt]
$60.0 $        \qquad        &0.1                 \qquad            & 2.276e-01           \qquad            &--         \qquad             & 5.356e-02      \qquad       & 2.09    \qquad        & 1.272e-02       \qquad       & 2.07     \qquad     & 3.105e-03 \qquad     & 2.03      \\  [3pt]
$300.0 $       \qquad       &0.02                \qquad     & 1.289e-01          \qquad           & --        \qquad             &3.103e-02      \qquad     & 2.05          & 7.755e-03        \qquad      & 2.00       \qquad     & 1.937e-03 \qquad     & 2.00     \\  [3pt]
$600.0 $       \qquad       &0.01              \qquad              & 6.252e-02            \qquad          & --            \qquad          & 1.533e-02     \qquad       &  2.03        \qquad     & 3.681e-03        \qquad      & 2.06    \qquad     & 9.082e-04 \qquad     & 2.02     \\
\bottomrule
\end{tabular}
\end{table*}
We test the GRP solvers with the initial coefficient $K_0$ ranging from 6 to 600, which represents the blood pressure ranging from low to moderate, respectively.  We use 51 cells in all tests. The other parameters in the problem are: $A_0$ = 1.0$ {\rm cm^2}$, $a_0$ = 0.2 ${\rm cm^2}$, $L$ = 1.0 cm, $T_0$ = 3s, $q_0 = 0 \frac{{\rm cm^3}}{s}$, $k_0$ = 1.2.

In Fig. 8.1. we can see the GRP solution fits well with the exact solution. Table 1 shows the empirical convergence rates for the GRP solver. The error is measured in $L^{\infty}$ norm. From the table we see the performance of GRP solver for small pressure ($K_0$= 6.0) does better than that for moderate pressure ($K_0$= 600). We also see that for all cases the GRP scheme attains order two, which is the expected order for the GRP solvers in the blood flow model.

\noindent
{\bf Example 2} (Riemann problem 1). We consider the Riemann problem with $k(x)$ being constant first. We take the initial data with
\begin{equation}\label{8.5}
\begin{array}{ll}
(A(x,0), u(x,0))=\left\{ \begin{array}{cc}
(3.5, 3.5), \quad &0\leq x< 0.3,\\[2pt]
(2.5, 5.0), \quad &0.3< x\leq 10.
\end{array}
\right.
\end{array}
\end{equation}
Here $k(x)=10$.  Numerical results are shown at time $t=0.75s$. See Fig. 8.2.  The solution includes a backward rarefaction wave, followed by a forward rarefaction wave. In Table 2, we  compare the $L^{1}$ error of  GRP scheme with that of Godunov scheme. We see that GRP scheme does better than Godunov scheme and GRP scheme also converges very fast for small grid sizes.
\begin{table}[htp]
\caption*{Table 2. Comparison of $L^1$ errors for example 2.}
\begin{center}
\begin{tabular}{|c|c|c|c|}
\hline
Number of nodes & GRP $L^{1}$-error & Godunov $L^{1}$-error \\
\hline
100 & 0.0464 & 0.0591\\
\hline
200 & 0.0374 & 0.0386 \\
\hline
300 & 0.0183 & 0.0278\\
\hline
400 & 0.0144 & 0.0225\\
\hline
\end{tabular}
\end{center}
\label{default}
\end{table}%

\begin{figure}[htbp]
\subfigure{
\begin{minipage}[t]{0.48\textwidth}
\centering
\includegraphics[width=1.1\textwidth]{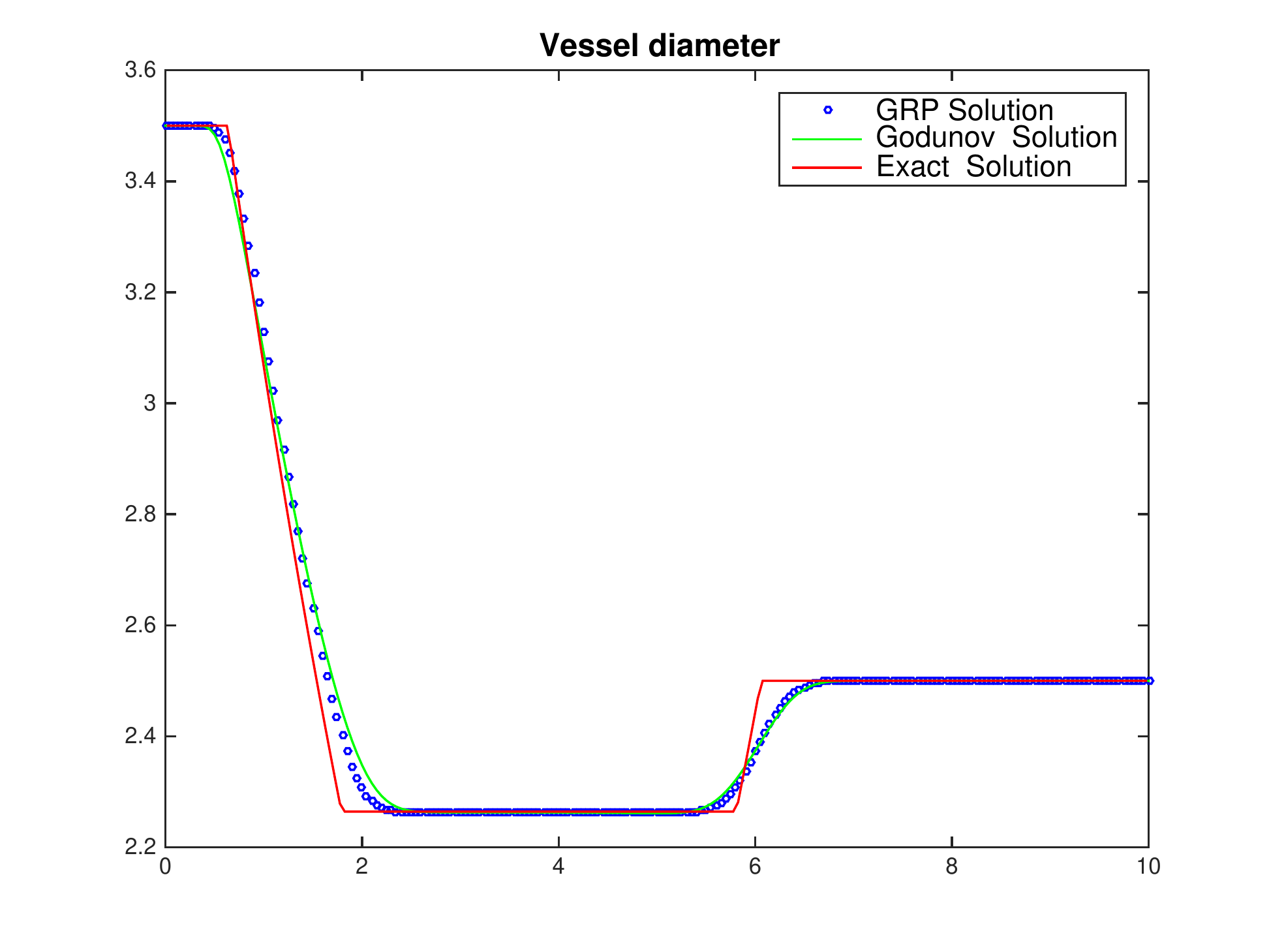}
\end{minipage}
}
\subfigure{
\begin{minipage}[t]{0.48\textwidth}
\centering
\includegraphics[width=1.1\textwidth]{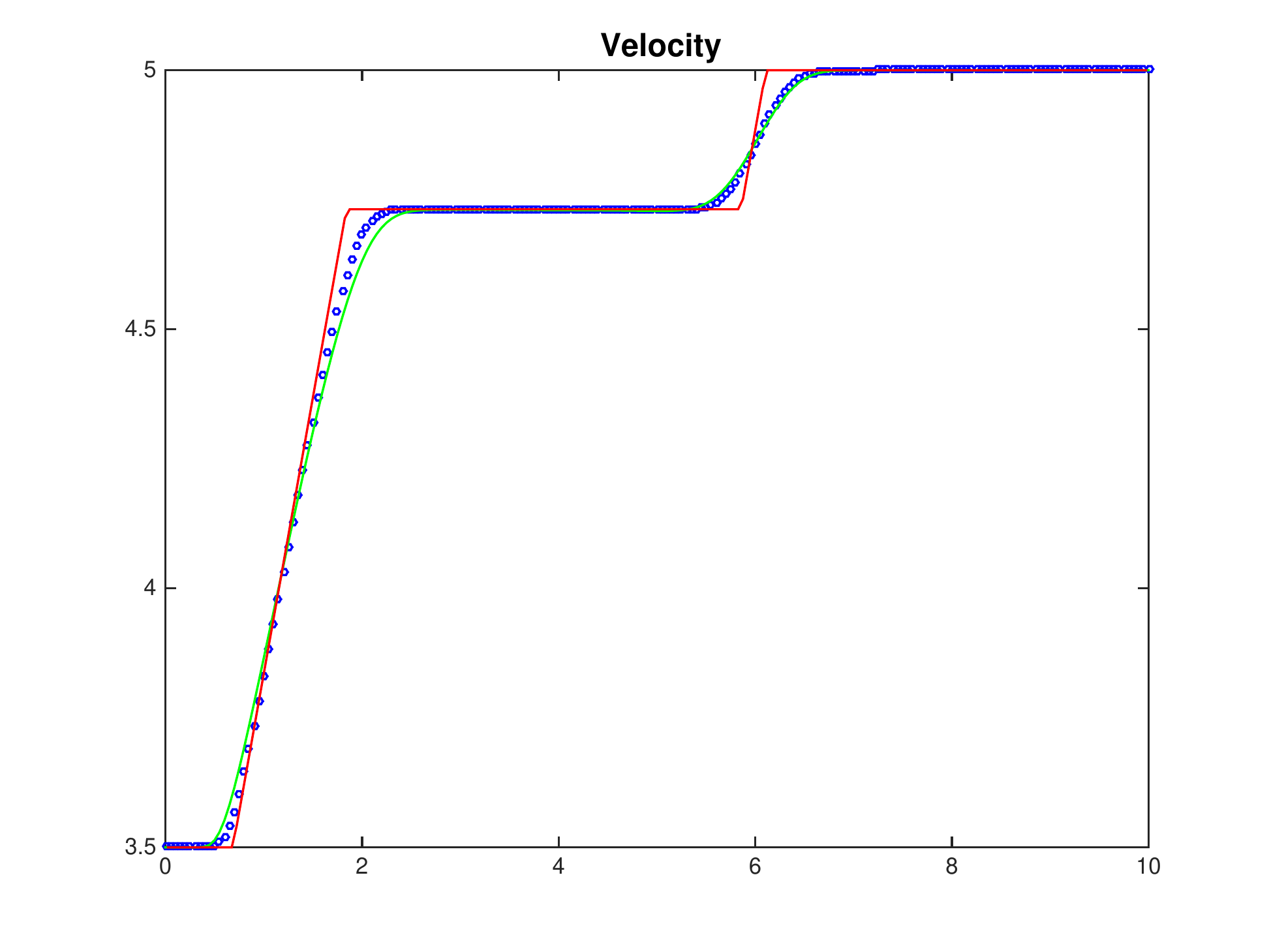}
\end{minipage}
}
\caption*{Fig. 8.2. Numerical results for Riemann problem 1: 200 grids are used.}
\end{figure}

\noindent
{\bf Example 3} (Riemann problem 2). We consider the Riemann problem where $k(x)$ changes continuously in this case. We take the initial data with
\begin{equation}\label{8.6}
\begin{array}{ll}
(A(x,0), u(x,0))=\left\{ \begin{array}{cc}
(1.2, 3.5), \quad &0\leq x< 0.6,\\[2pt]
(1.5, 2.5), \quad &0.6< x\leq 4.0.
\end{array}
\right.
\end{array}
\end{equation}

 $k(x)$ is given by
\begin{equation}\label{8.7}
\begin{array}{ll}
k(x)=\left\{ \begin{array}{cc}
6.0,\quad &0\leq x< 0.6, \\[2pt]
6.0\cdot \left(1.0-0.5*{\rm sin}(2.5\pi\cdot (x-0.6))\right), \quad &0.6\leq x< 0.8,\\[2pt]
3.0, \quad &0.8\leq x\leq 4.0.
\end{array}
\right.
\end{array}
\end{equation}

Numerical results are shown at time $t=0.6s$. See Fig. 8.3.  The solution includes a stationary wave first, followed by a backward shock wave, then followed by a forward shock wave. In Table 3, we present the $L^{1}$ error of GRP scheme. It can be seen that GRP scheme captures the steady wave quite well.

\begin{table}[htp]
\caption*{Table 3. Comparison of $L^1$ errors for example 3.}
\begin{center}
\begin{tabular}{|c|c|c|c|}
\hline
Number of nodes & GRP $L^{1}$-error & Godunov $L^{1}$-error \\
\hline
100 &  0.0852 & 0.1120 \\
\hline
200 &   0.0651 & 0.0712 \\
\hline
300 & 0.0328  & 0.0523 \\
\hline
400 &  0.0236 & 0.0412 \\
\hline
\end{tabular}
\end{center}
\label{default}
\end{table}%

\begin{figure}[htbp]
\subfigure{
\begin{minipage}[t]{0.48\textwidth}
\centering
\includegraphics[width=1.1\textwidth]{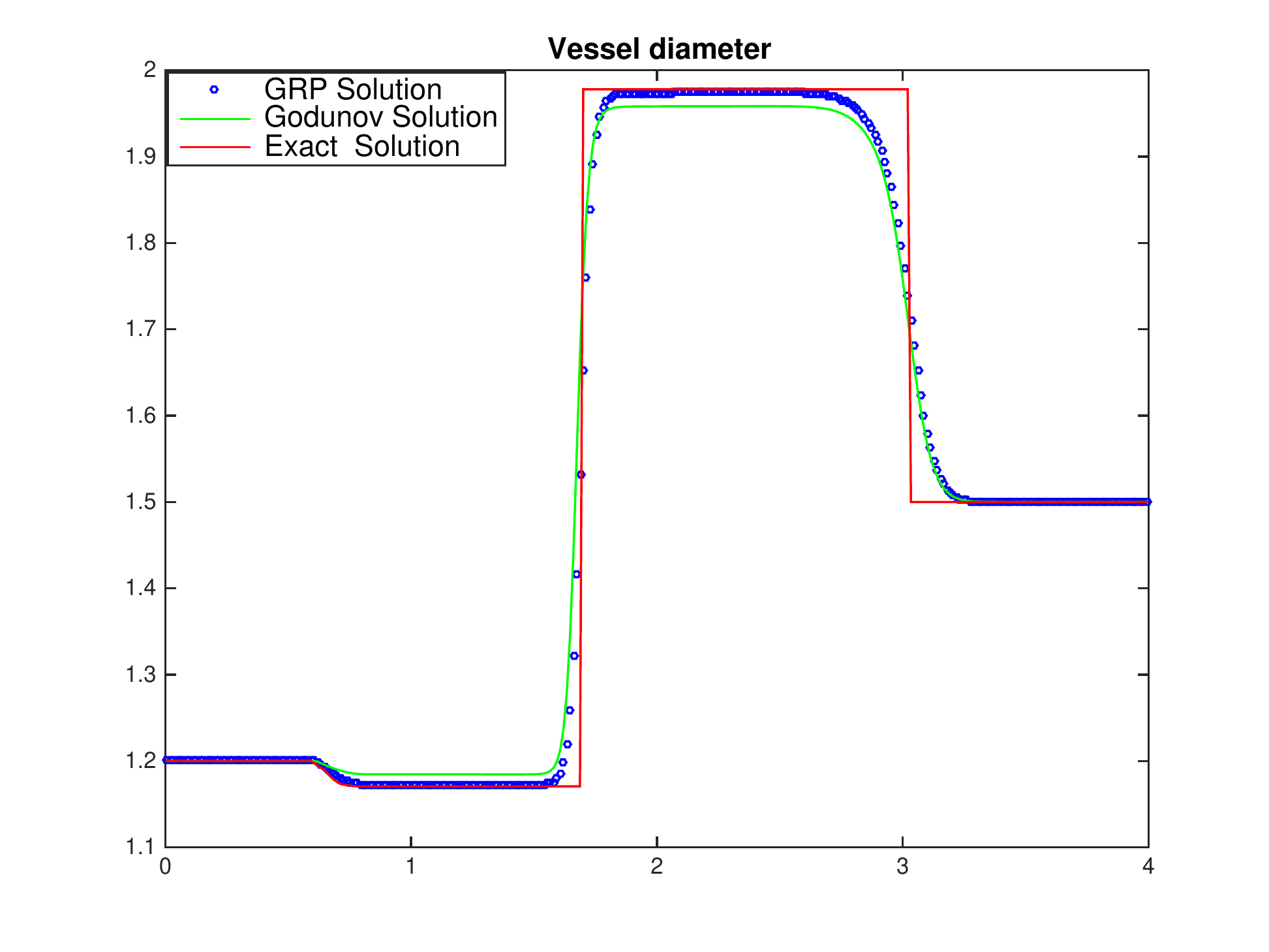}
\end{minipage}
}
\subfigure{
\begin{minipage}[t]{0.48\textwidth}
\centering
\includegraphics[width=1.1\textwidth]{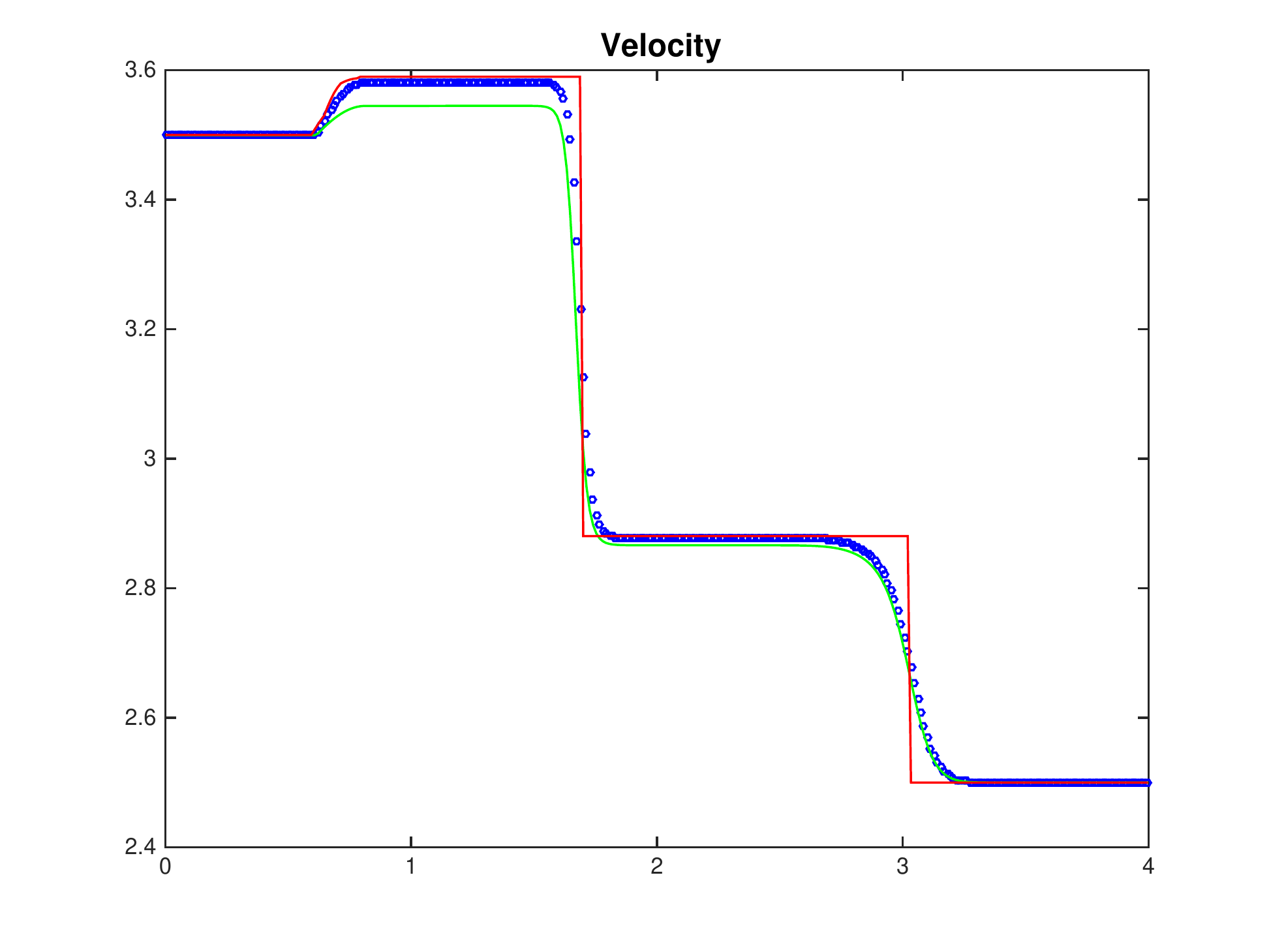}
\end{minipage}
}
\caption*{Fig. 8.3. Numerical results for Riemann problem 2: 300 grids are used.}
\end{figure}

\noindent
{\bf Example 4} (Shock interaction problem). We consider the shock interaction problem in this case. The initial data is equipped with
\begin{equation}\label{8.8}
\begin{array}{ll}
(A(x,0), u(x,0))=\left\{ \begin{array}{cc}
(2.0, 3.061),\quad & 0\leq x< 0.6,\\[2pt]
(1.5, 2.5), \quad & 0.6\leq x< 0.8,\\[2pt]
(1.3307492, 2.818), \quad & 0.8\leq x\leq 15.0,
\end{array}
\right.
\end{array}
\end{equation}
and $k(x)$ is given by
\begin{equation}\label{8.9}
\begin{array}{ll}
k(x)=\left\{ \begin{array}{cc}
6.0,\quad &0\leq x<0.6, \\[2pt]
6.0\cdot \left(1.0-0.5*{\rm sin}(2.5\pi\cdot (x-0.6))\right), \quad &0.6\leq x<0.8,\\[2pt]
3.0, \quad &0.8\leq x\leq 15.0.
\end{array}
\right.
\end{array}
\end{equation}
A shock wave emanates from $x=0.6$ at time $t=0$ and propagates toward right. Which interacts with the stationary wave located in the interval $0.6\leq x\leq 0.8$.
Numerical results are shown at time $t=1.5s$. See Fig. 8.4. From Table 4, we conclude that the GRP scheme converges faster than the Godunov scheme. Moreover, when we shrink grid sizes such that the node number is greater than $400$, the $L^1$ error of the GRP scheme dose not decay as much as that in the small grid size case, which has a similar behavior as the Godunov scheme.

\begin{table}[htp]
\caption*{Table 4. Comparison of $L^1$ errors for example 4.}
\begin{center}
\begin{tabular}{|c|c|c|c|}
\hline
Number of nodes & GRP $L^{1}$-error & Godunov $L^{1}$-error \\
\hline
200 &  0.0712 & 0.1031\\
\hline
400 &  0.0275 & 0.0582  \\
\hline
800 & 0.0163  & 0.0359  \\
\hline
1000 & 0.0127  & 0.0306 \\
\hline
\end{tabular}
\end{center}
\label{default}
\end{table}%

\begin{figure}[htbp]
\subfigure{
\begin{minipage}[t]{0.48\textwidth}
\centering
\includegraphics[width=1.1\textwidth]{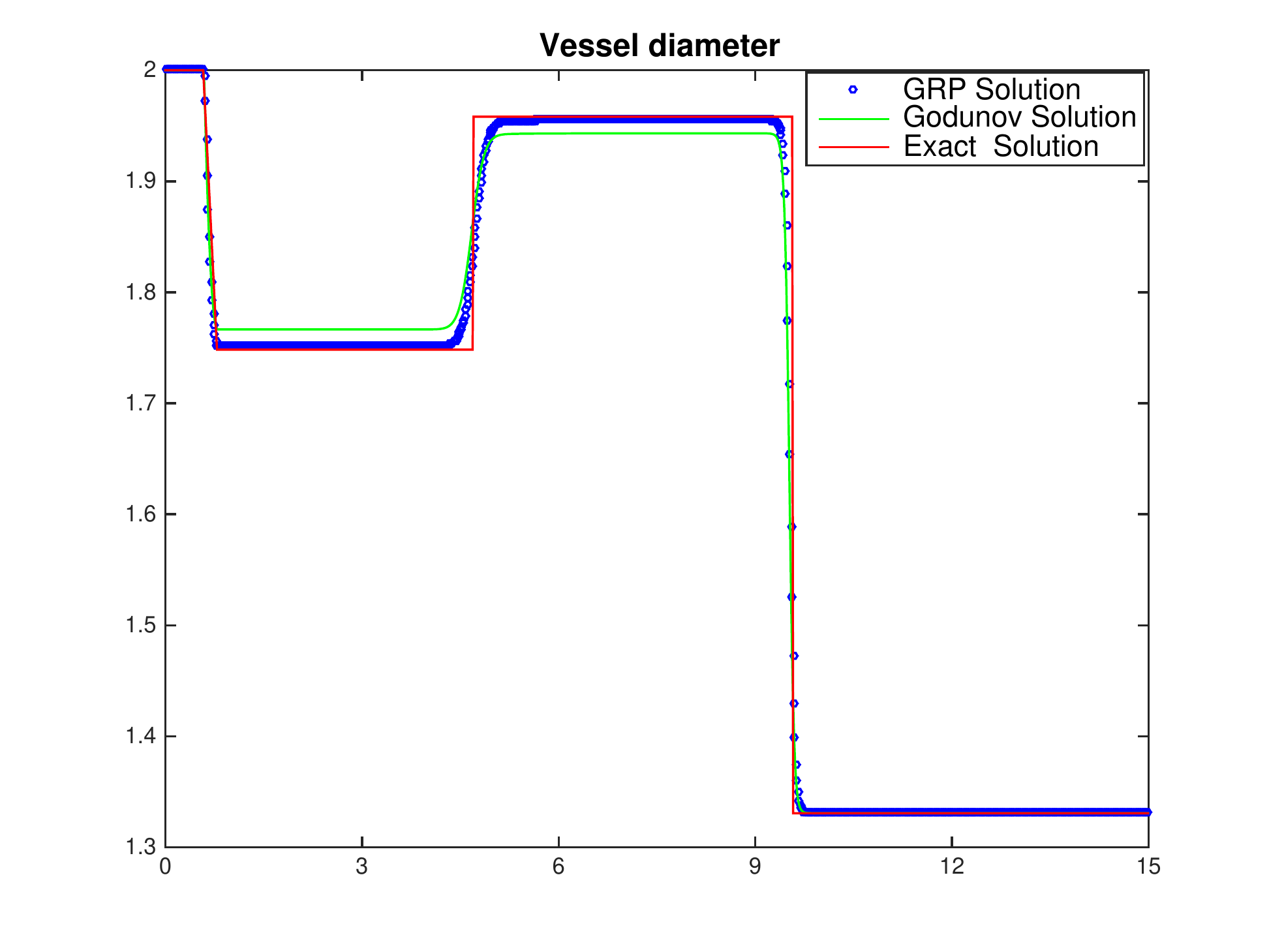}
\end{minipage}
}
\subfigure{
\begin{minipage}[t]{0.48\textwidth}
\centering
\includegraphics[width=1.1\textwidth]{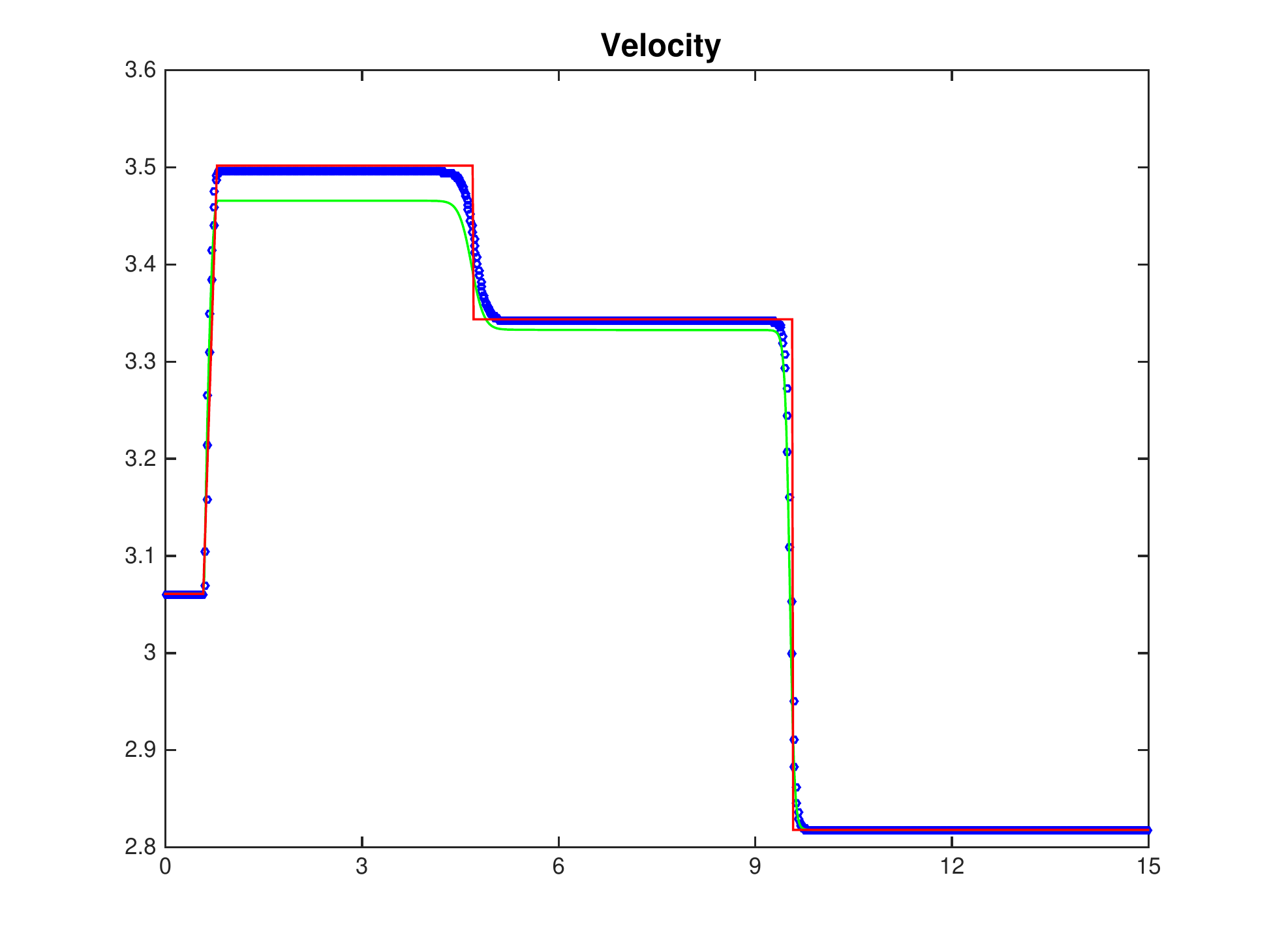}
\end{minipage}
}
\caption*{Fig. 8.4. Numerical results for shock interaction problem.}
\end{figure}

\subsection{\emph{Two-dimensional examples}} \

We give some numerical examples of two-dimensional Riemann problem for blood flow model \eqref{6.1}.  The two-dimensional Riemann problems for Euler equations were proposed by Zhang and Zheng \cite{Zhang}. Some numerical examples can be found in \cite{BenL2,Chang,Zheng}. Since the structure of Riemann solutions of blood flow is similar to Euler equations, we use the two-dimensional Riemann problems to checking the accuracy of GRP scheme. Our results are new and provide a direction of multidimensional blood flow research. Each example consists of four constant states in the four quadrants. We take $k=5.0$ in the following examples.

\noindent
{\bf Example 5} (Interaction of rarefaction waves).  The Riemann initial data is chosen as:
\begin{equation}\label{8.10}
\begin{array}{llll}
(A(x,y,0), u(x,y,0),v(x,y,0))=\left\{ \begin{array}{cc}
(1.0, 0.0, 0.0),\quad &  x> 0.5,~y> 0.5,\\[4pt]
(0.5179, -0.9316, 0.0),\quad &  x< 0.5,~y> 0.5,\\[4pt]
(0.1492, -0.9316, -1.4045),\quad & x< 0.5,~y< 0.5,\\[4pt]
(0.356, 0.0, -1.4045), \quad & x>0.5, y<0.5.
\end{array}
\right.
\end{array}
\end{equation}

\begin{figure}[htbp]
\centering
\includegraphics[width=0.75\textwidth]{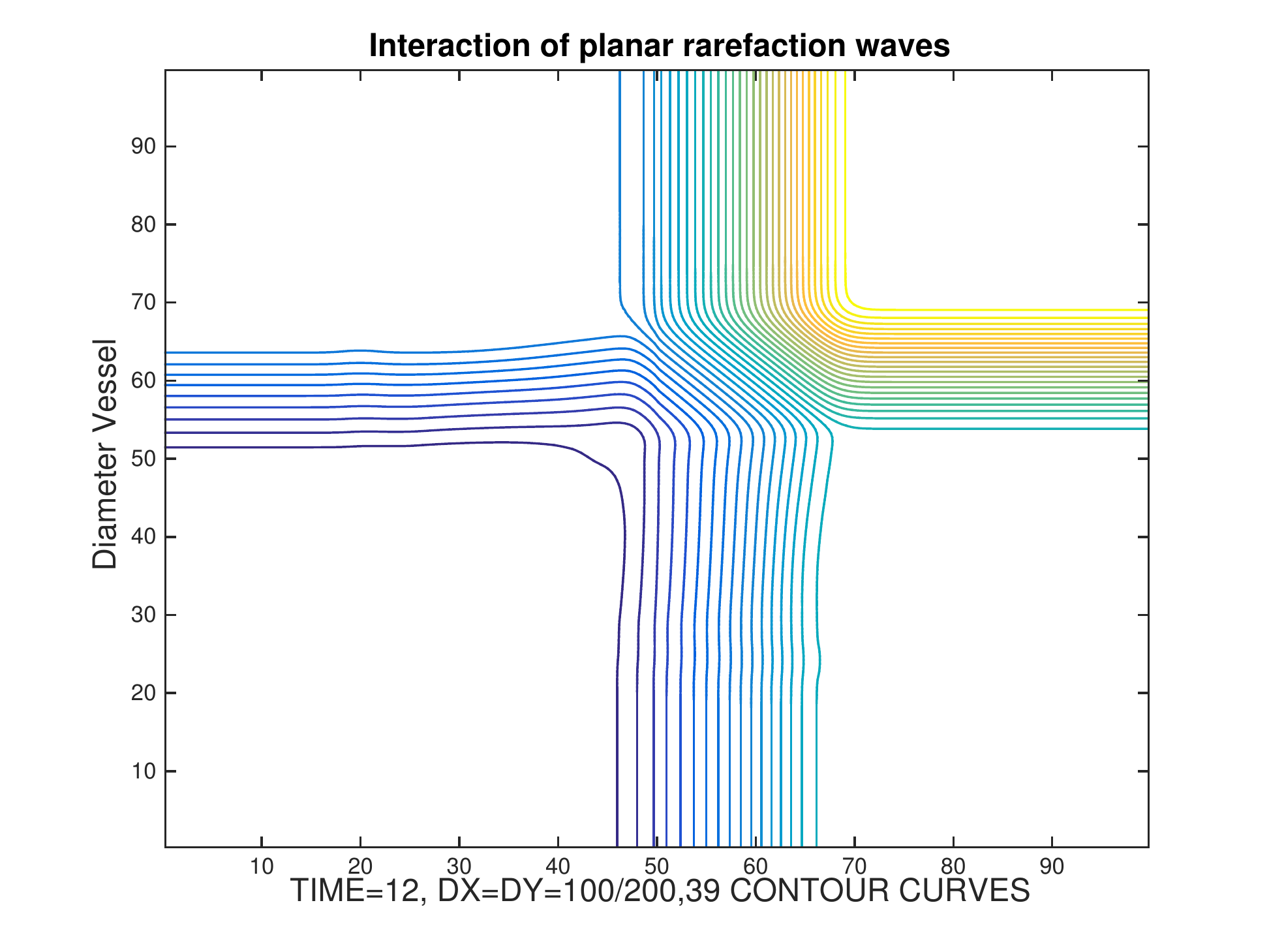}
\caption*{Fig. 8.5. Numerical results for four planar rarefaction waves.}
\end{figure}
We see four planar rarefaction waves interact with each other, they penetrate each other and there is no new types of wave patterns in this case. See Fig. 8.5. Our result is similar compared to the Euler cases in \cite{Chang}.

\noindent
{\bf Example 6} (The formation of shocks in continuous domain).  We still consider the interaction of four rarefaction waves.
The Riemann initial data is chosen as:
\begin{equation}\label{8.11}
\begin{array}{llll}
(A(x,y,0), u(x,y,0),v(x,y,0))=\left\{ \begin{array}{cc}
(1.0, 0.0, 0.0),\quad &  x> 0.5,~y> 0.5,\\[2pt]
(1.552, -0.7169, 0.0),\quad &  x< 0.5,~y> 0.5,\\[2pt]
(1.0, -0.7169, -0.7169),\quad & x< 0.5,~y< 0.5,\\[2pt]
(1.552, 0.0, -0.7169), \quad & x>0.5, y<0.5.
\end{array}
\right.
\end{array}
\end{equation}

\begin{figure}[htbp]
\centering
\includegraphics[width=0.7\textwidth]{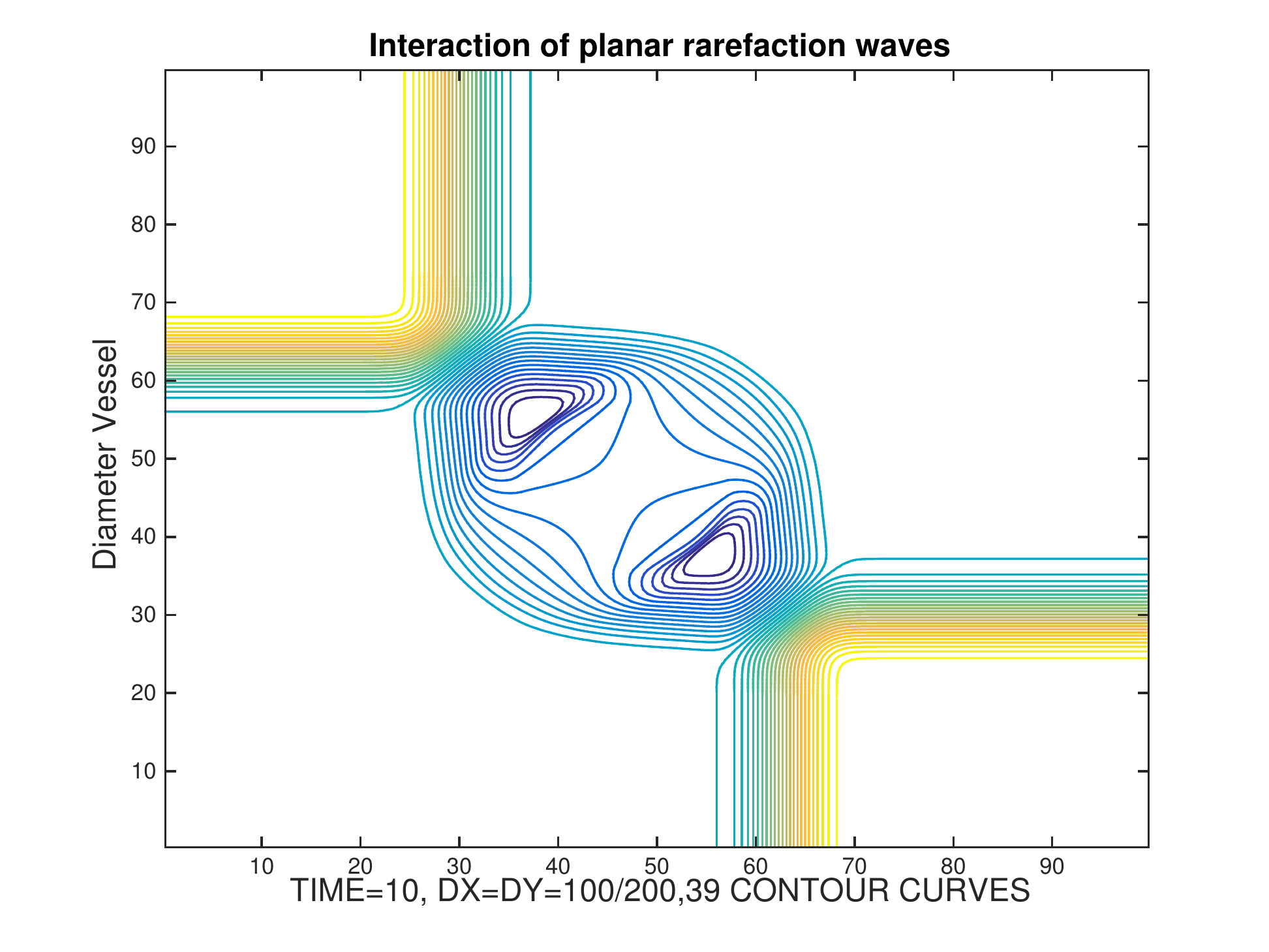}
\caption*{Fig. 8.6. Numerical results for four planar rarefaction waves.}
\end{figure}
We observe that two symmetric compressive waves in the interaction domain. See Fig. 8.6. This is a very typical two-dimensional phenomenon which also shows up in the 2D Euler cases.

\noindent
{\bf Example 7} (Interaction of shock waves).  This example shows the interaction of shocks.  The Riemann initial data is chosen as:
\begin{equation}\label{8.12}
\begin{array}{llll}
(A(x,y,0), u(x,y,0),v(x,y,0))=\left\{ \begin{array}{cc}
(1.0, 0.0, 0.0),\quad &  x> 0.5,~y> 0.5,\\[2pt]
(1.5, 0.6655, 0.0),\quad &  x< 0.5,~y> 0.5,\\[2pt]
(1.0, 0.6655, 0.6655),\quad & x< 0.5,~y< 0.5,\\[2pt]
(1.5, 0.0, 0.6655), \quad & x>0.5, y<0.5.
\end{array}
\right.
\end{array}
\end{equation}

\begin{figure}[htbp]
\centering
\includegraphics[width=0.7\textwidth]{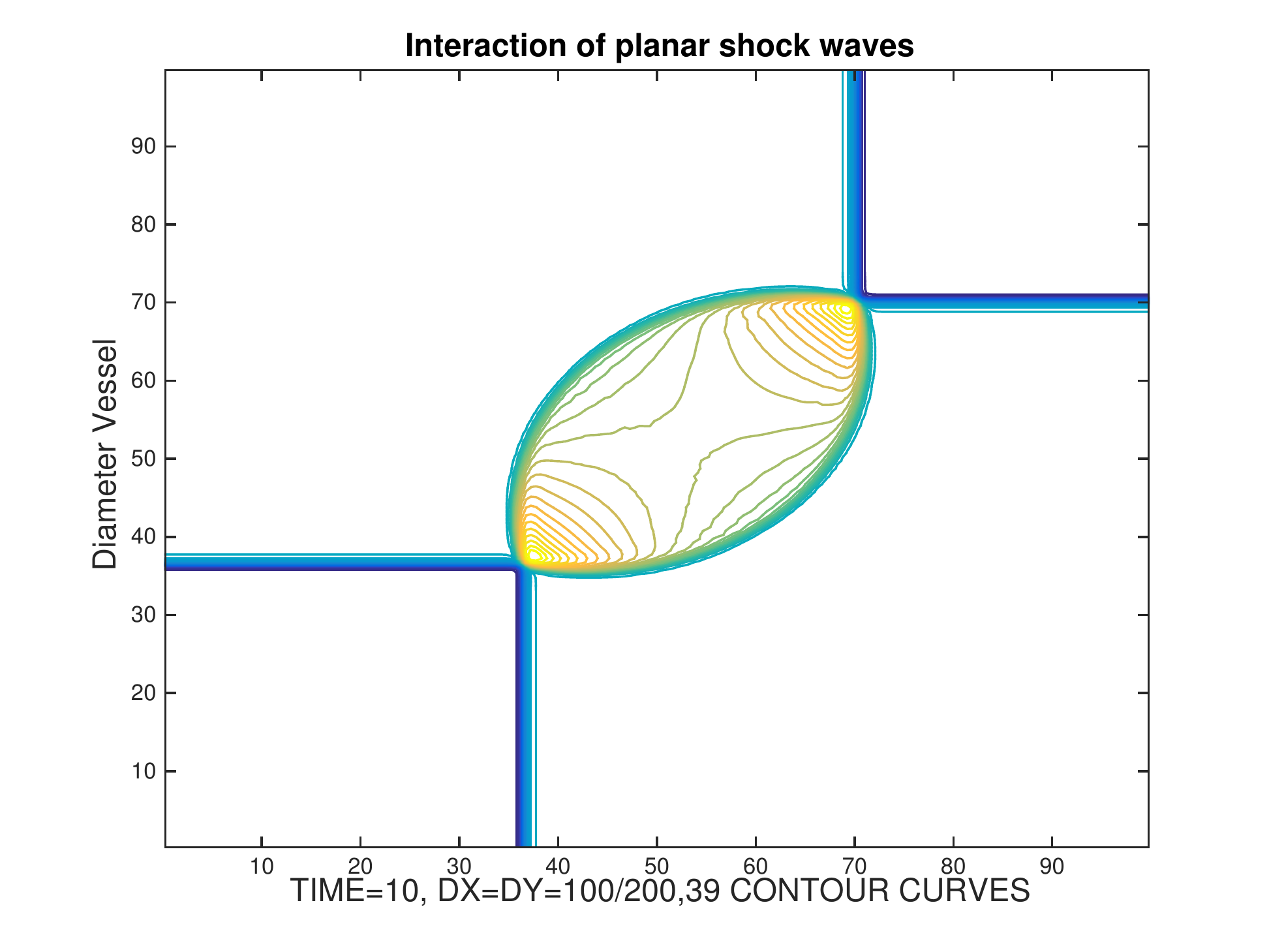}
\caption*{Fig. 8.7. Numerical results for four planar shock waves.}
\end{figure}
The four shock wave interact as time evolves, and a complicated wave pattern including shock reflection emerges. The numerical result is displayed in Fig. 8.7.

\noindent
{\bf Example 8} (Interaction of shock waves).  This example shows another shock interaction result. The Riemann initial data is chosen as:
\begin{equation}\label{8.13}
\begin{array}{llll}
(A(x,y,0), u(x,y,0),v(x,y,0))=\left\{ \begin{array}{cc}
(3.5, 0.0, 0.0),\quad &  x> 0.5,~y> 0.5,\\[2pt]
(1.428, 1.7849, 0.0),\quad &  x< 0.5,~y> 0.5,\\[2pt]
(0.46599, 1.7849, 1.7849),\quad & x< 0.5,~y< 0.5,\\[2pt]
(1.428, 0.0, 1.7849), \quad & x>0.5, y<0.5.
\end{array}
\right.
\end{array}
\end{equation}

\begin{figure}[htbp]
\centering
\includegraphics[width=0.7\textwidth]{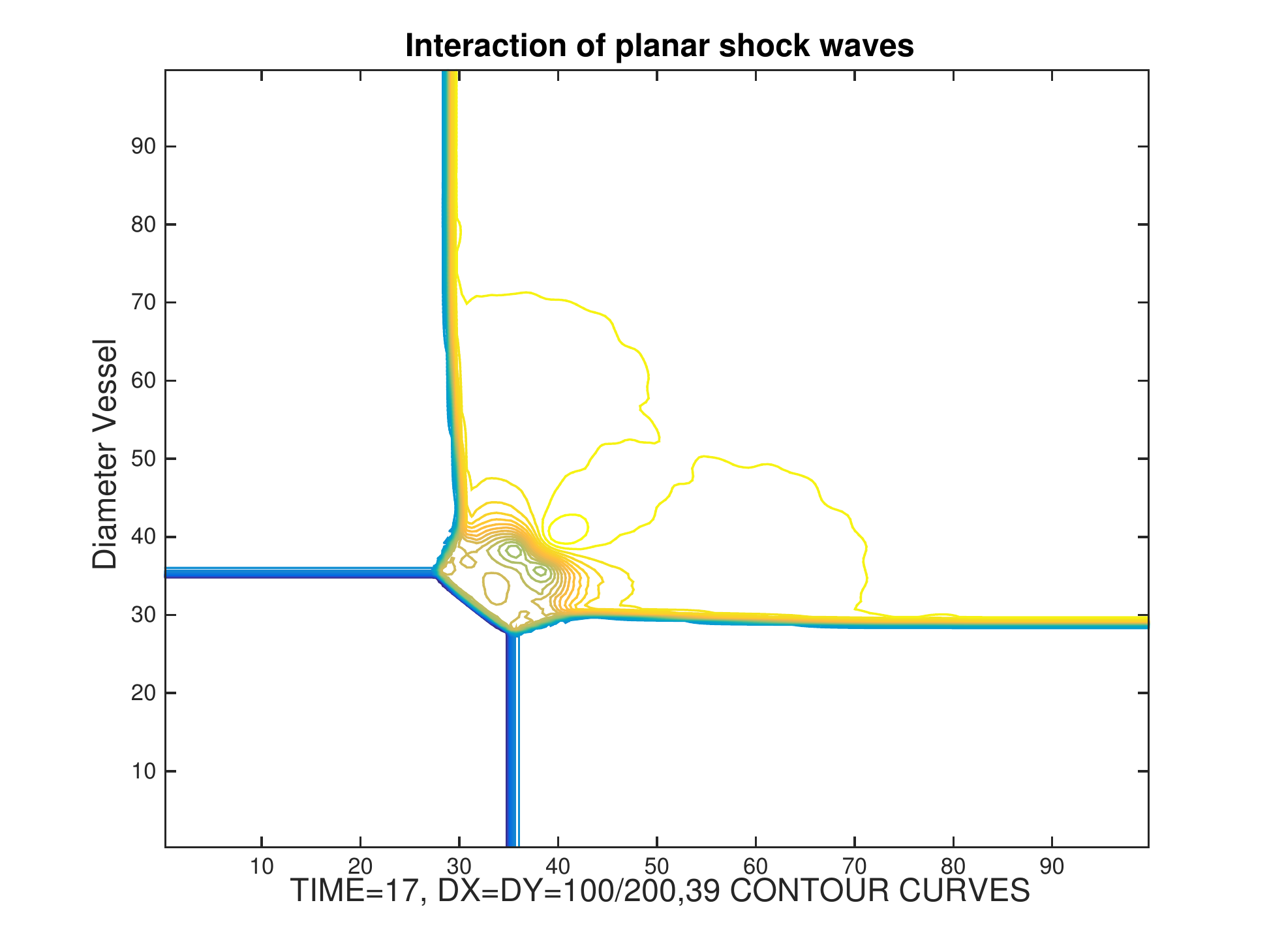}
\caption*{Fig. 8.8. Numerical results for four planar shock waves.}
\end{figure}
The four shock waves interact and the result includes triple points, Mach stems, etc. The numerical result is displayed in Fig. 8.8.

In summary, we apply the GRP scheme in the blood flow model in arteries. By using the Riemann invariants, we write such system in a quasi-diagonal form, which is used to resolve rarefaction wave. For shock wave, we use the Rankine-Hugoniot conditions to resolve the local GRP formulation. The acoustic approximation and sonic case are also given. We further extend the GRP scheme to the two dimensional case directly. The expected order is validated and numerical results are presented to justify the accuracy of the derived GRP scheme for the blood flow model.

\appendix

\section{Useful coefficients for GRP scheme}

\emph{A.1. This appendix gives the coefficients of the other two cases in Proposition 4.2, i.e., backward shock wave and forward rarefaction wave.}

{\bf (1). The coefficients of forward rarefaction wave.}\
For forward rarefaction wave, we have $\beta=u(0,\beta)+c(0,\beta)$, $\theta=c(0,\beta)/c_R$ in Proposition 4.2. The coefficients  are given by:
\begin{equation}\label{A1}
\left\{\begin{array}{ll}
\displaystyle
\left(a_R(0,\beta),b_R(0,\beta)\right)=\left(1, -\frac{c(0,\beta)}{A(0,\beta)}\right),\\[12pt]
\displaystyle  d_R(\beta)=-\frac{\beta-2c(0,\beta)}{2c(0,\beta)}\cdot (\beta-\phi_R)^{\frac{m+2}{2m}}\cdot \frac{\partial \phi}{\partial \alpha}(0,\beta)+\frac{\beta}{2c(0,\beta)}\cdot \left(\frac{k^{'}(0)}{\rho}-\frac{\left(\beta-c(0,\beta)\right)\cdot c(0,\beta)}{m}\cdot\frac{k^{'}(0)}{k(0)}\right),
\end{array}\right.
\end{equation}
where

\begin{equation}\label{A2}
\begin{array}{ll}
\displaystyle \frac{\partial \phi}{\partial \alpha}(0,\beta)=\frac{\partial \phi}{\partial \alpha}(0,\beta_R)+\frac{k'(0)}{\rho}\left(\frac{m+2}{m}c_R\right)^{-\frac{m+2}{2m}}(\theta^{-\frac{m+2}{2m}}-1)
\\[9pt]
~~\displaystyle  +\frac{\phi_R}{m-2}\frac{k'(0)}{k(0)}\left(\frac{m+2}{m}c_R\right)^{\frac{m-2}{2m}}(\theta^{\frac{m-2}{2m}}-1)+\frac{2}{(m+2)(3m-2)}\frac{k'(0)}{k(0)}\left(\frac{m+2}{m}c_R\right)^{\frac{3m-2}{2m}}(\theta^{\frac{3m-2}{2m}}-1)
\end{array}
\end{equation}

with
\begin{equation}\label{A3}
\frac{\partial \phi}{\partial \alpha}(0,\beta_R)=\left(\frac{m+2}{m}c_R\right)^{-\frac{m+2}{2m}}\cdot \left(\frac{k^{'}(0)}{\rho}-\frac{u_Rc_R}{m}\cdot \frac{k^{'}(0)}{k(0)}+2c_R\phi_R^{'}\right).
\end{equation}

{\bf (2). The coefficients of backward shock wave.}\
For backward shock wave, The coefficients of backward shock wave  in Proposition 4.2 are given by
\begin{equation}\label{A4}
\left\{\begin{array}{ll}
\displaystyle a_L=1-\frac{\sigma_L }{u_{*}^2-c_{*}^2}\cdot \left(u_{*}-A_*\Phi_3\right), \quad b_L=\frac{\sigma_L}{u_{*}^2-c_{*}^2}\cdot \left(\frac{c_{*}^2}{A_*}-u_{*}\Phi_3\right)-\Phi_3,\\[10pt]
\displaystyle d_L=\left(\sigma_L-u_L\right)u_L^{'}-\frac{k^{'}(0)}{\rho}\cdot \left(\left(\frac{A_L}{A_e}\right)^{m}-1\right)-\frac{mk(0)A_L^{m-1}A_L^{'}}{\rho A_e^{m}}+\Phi_4 \cdot \left((\sigma_L-u_L)A_L^{'}+A_Lu_L^{'}\right)\\[12pt]
\qquad\qquad\displaystyle +\frac{\sigma_L k'(0)}{\rho(u_*^{2}-c_*^{2})}\cdot \left(\left(\frac{A_*}{A_e}\right)^{m}-1\right)\cdot \left(u_*-A_* \Phi_3\right)-\sigma_Lk'(0)\Phi_k.
\end{array}\right.
\end{equation}
Here $\Phi_3$ and $\Phi_4$ are
\begin{equation}\label{A5}
\displaystyle  \Phi_3=\frac{\partial \Phi}{\partial A}\left(A_*;A_L,u_L\right),\quad \Phi_4=\frac{\partial \Phi}{\partial \overline{A}}\left(A_*;A_L,u_L\right),
\end{equation}
where
\begin{equation}\label{A6}
\left\{\begin{array}{ll}
\displaystyle
\frac{\partial \Phi}{\partial A}\left(A; \overline{A},\overline{u}\right)=\sqrt{\frac{mkA\overline{A}}{\rho(m+1)A_e^{m}(A-\overline{A})\left(A^{m+1}-\overline{A}^{m+1}\right)}}\frac{mA^{m+1}\overline{A}(A-\overline{A})+\overline{A}\left(A^{m+2}-\overline{A}^{m+2}\right)}{2\left(A\overline{A}\right)^{2}},\\[12pt]
\displaystyle \frac{\partial \Phi}{\partial \overline{A}}\left(A; \overline{A},\overline{u}\right)=\sqrt{\frac{mkA\overline{A}}{\rho(m+1)A_e^{m}(A-\overline{A})\left(A^{m+1}-\overline{A}^{m+1}\right)}}\frac{m\overline{A}^{m+1}A(\overline{A}-A)+A\left(\overline{A}^{m+2}-A^{m+2}\right)}{2\left(A\overline{A}\right)^{2}}.
\end{array}\right.
\end{equation}

\noindent
\emph{A.2. Sonic case.}
When the $t-$axis is located in the rarefaction wave associated with $u+c$. Then $\left(\partial u/\partial t\right)_*$ and  $\left(\partial A/\partial t\right)_*$ in Proposition 4.3 is given by
\begin{equation}\label{A.7}
\left\{\begin{array}{ll}
\displaystyle  \left(\frac{\partial u}{\partial t}\right)_{*}=\frac12\left(d_R(0)+k'(0)\cdot\left(\frac1{\rho}+\frac{u_{*}c_{*}}{mk(0)}\right)\right),\\[10pt]
\displaystyle  \left(\frac{\partial A}{\partial t}\right)_{*}=\frac{A_{*}}{2c_{*}}\left(-d_R(0)+k'(0)\cdot\left(\frac1{\rho}+\frac{u_{*}c_{*}}{mk(0)}\right)\right),
\end{array}\right.
\end{equation}
where $d_R(0)$ is given in (A.1).

\

{\bf Acknowledgements.} We would like to thank Professor Jiequan Li for sharing his code.

\bibliographystyle{amsplain}

 \end{document}